\DeclareMathOperator{\Frob}{Frob}
\DeclareMathOperator{\End}{End}
\DeclareMathOperator{\Gal}{Gal}
\DeclareMathOperator{\GL}{GL}
\DeclareMathOperator{\SL}{SL}
\DeclareMathOperator{\PGL}{PGL}
\DeclareMathOperator{\Tr}{Tr}
\DeclareMathOperator{\Hom}{Hom}
\DeclareMathOperator{\Ann}{Ann}
\DeclareMathOperator{\Ker}{Ker}
\DeclareMathOperator{\an}{an}
\DeclareMathOperator{\Katz}{Katz}
\newcommand{\C}{\mathbb{C}}
\newcommand{\Z}{\mathbb{Z}}
\newcommand{\Q}{\mathbb{Q}}
\newcommand{\F}{\mathbb{F}}
\newcommand{\T}{\mathbb{T}}
\newcommand{\tensor}{\otimes}
\providecommand{\customgenericname}{}
\newcommand{\newcustomtheorem}[2]{%
  \newenvironment{#1}[1]
  {%
   \renewcommand\customgenericname{#2}%
   \renewcommand\theinnercustomgeneric{##1}%
   \innercustomgeneric
  }
  {\endinnercustomgeneric}
}
\newtheorem{thm}{Theorem}[section]
\newtheorem{lem}[thm]{Lemma}
\theoremstyle{definition}
\theoremstyle{remark}
\newtheorem{ques}[thm]{Question}
\title{The Index of $\T_2^{\an}$ inside $\T_2$}
\author{Noah Taylor\footnote{The author is supported in part by NSF Grant DMS-$1701703$.}}
\date{February 28, 2021}
\begin{document}
\maketitle
\section{Introduction}\label{intro}
Let $N$ be a prime number and let $S_2(\Gamma_0(N),\Z)$ denote the modular forms of weight $2$ and level $\Gamma_0(N)$ with integer coefficients, and for any other ring $R$, we denote $S_2(\Gamma_0(N),R)=S_2(\Gamma_0(N),\Z)\tensor R$.  If $R$ is a characteristic $p$ ring, we define $S_2(\Gamma_0(N),R)^{\text{Katz}}$ to be the $R$-module of Katz forms as defined in \cite[Section 1.2]{MR0447119}, and define similar notation for the spaces of weight $1$ forms.  For $N\nmid n$, let $T_n$ denote the $n$th Hecke operator inside $\End(S_2(\Gamma_0(N),\overline{\Z}))$, and let $U_N$ denote the $N$th Hecke operator.  We let $\T^{\an}$ denote $\Z[T_3, T_5, \ldots]$, the algebra generated by $T_n$ for $(2N, n)=1$, and we denote $\T^{\an}[T_2, U_N]$ by $\T$.  The goal of this paper is to compute the index of $\T^{\an}$ inside $\T$.  Specifically, we prove the following theorem in sections \ref{integrality} and \ref{dimcount}:\begin{thm}\label{mainthm}The quotient $\T/\T^{\an}$ is purely $2$-torsion, and \[\dim_{\F_2}\T/\T^{\an}=\dim_{\F_2}S_1(\Gamma_0(N), \F_2)^{\Katz}.\]  In other words, if $c=\dim_{\F_2}S_1(\Gamma_0(N), \F_2)^{\Katz}$ is the dimension of the weight $1$ level $\Gamma_0(N)$ Katz forms over $\F_2$, then the index of $\T^{\an}$ in $\T$ is equal to $2^c$.\end{thm}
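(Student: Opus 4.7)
My plan is to prove Theorem~\ref{mainthm} in two stages, corresponding to the sections indicated. Throughout I use that since $N$ is prime, $S_2(\Gamma_0(1),\Z) = 0$, so every $f \in S_2(\Gamma_0(N))$ is a newform; Atkin--Lehner theory then forces $U_N$ to act as $\pm 1$ on each normalized eigenform, yielding the relation $U_N^2 = 1$ in $\T$.

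For Section~\ref{integrality}, I would show $2\T \subseteq \T^{\an}$ by localizing at each prime $p$. For odd $p$, each localization $\T_\mathfrak{m}$ at a maximal ideal of residue characteristic $p$ is a finite $\Z_p$-algebra carrying a universal pseudo-Galois representation, topologically generated by traces of Frobenius at $\ell$ in any density-one set of primes avoiding $p$ and $N$. Omitting $\ell = 2$ still yields a density-one set, so $\T^{\an}_\mathfrak{m} = \T_\mathfrak{m}$, with topological density in a finite $\Z_p$-module implying literal equality. For $p = 2$, the Chebotarev argument fails because the residual representations are ramified at $2$, so I would use direct Hecke identities: the relation $(U_N-1)(U_N+1) = 0$ combined with $T_\ell U_N = T_{\ell N}$ (for $\ell$ odd, $\ell \ne N$) should yield an anemic expression for $2U_N$, and the quadratic relation $T_2^2 = T_4 + 2$ combined with further congruences should handle $2T_2$ and the remaining products $2T_2^a U_N^b$.

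For Section~\ref{dimcount}, since $\T/\T^{\an}$ is an $\F_2$-vector space, its dimension equals that of the cokernel of $\T^{\an} \otimes \F_2 \to \T \otimes \F_2$. Under the perfect pairing $(T,f) \mapsto a_1(Tf)$ between $\T$ and $S_2(\Gamma_0(N),\Z)$, this cokernel is dual to a distinguished subspace of $S_2(\Gamma_0(N),\F_2)^{\Katz}$. I would identify this subspace as the image of multiplication by the mod-$2$ Hasse invariant $A \in M_1(\Gamma_0(N),\F_2)^{\Katz}$; that is, the image of the map $S_1(\Gamma_0(N),\F_2)^{\Katz} \to S_2(\Gamma_0(N),\F_2)^{\Katz}$ sending $f$ to $Af$. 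Three facts drive this identification: (i) the $q$-expansion of $A$ is $1$, so multiplication by $A$ preserves $q$-expansions and is injective by the $q$-expansion principle; (ii) for $\ell$ odd and prime to $N$, the Hecke operator $T_\ell$ acts on $q$-expansions in a weight-independent manner in characteristic $2$ (since $\ell^{k-1} \equiv 1 \pmod 2$), so multiplication by $A$ is $\T^{\an}$-equivariant; (iii) the action of $T_2 = U_2$ in characteristic $2$ on $Af$ differs from its action on a classical weight-$2$ form, producing the required discrepancy.

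The main obstacle will be the $p = 2$ portion of Section~\ref{integrality}: producing explicit anemic expressions for $2T_2^a U_N^b$ without Chebotarev density requires careful manipulation of Hecke identities, and verifying the correct closure properties modulo $2$ is delicate. A secondary challenge in Section~\ref{dimcount} is the precise comparison of classical and Katz forms in characteristic $2$, ensuring the cokernel coincides exactly with the image of multiplication by $A$ rather than a proper sub- or super-space.
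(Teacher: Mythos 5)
Your outline reproduces the paper's general frame (reduce to completions, then use the pairing $(T,f)\mapsto a_1(Tf)$ to dualize $\T/\T^{\an}$ into a subspace of weight $2$ forms tied to weight $1$ Katz forms), but there are genuine gaps at the two places where the real work happens. First, the $p=2$ half of the integrality statement cannot be done by ``direct Hecke identities.'' The relation $T_2^2=T_4+2$ is circular for this purpose, since $T_4\notin\T^{\an}$ (it is a polynomial in $T_2$), and $T_\ell U_N=T_{\ell N}$ only produces operators outside $\T^{\an}$ as well; no formal identity in the anemic operators can yield $2T_2$, precisely because the index $2^c$ depends on the arithmetic of weight $1$ forms. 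The paper's proof of $2T_2\in\T^{\an}_{\mathfrak m}$ is genuinely Galois-theoretic: in the ordinary non-Eisenstein case it uses Wiles's description of $\rho_f|_{D_2}$ to write $2a_2=3\Tr\rho(g)-\Tr\rho(h)$ for suitable $g,h\in D_2$ (and $T_2^2\in\T^{\an}_{\mathfrak m}+T_2\T^{\an}_{\mathfrak m}$ similarly), in the Eisenstein case it invokes Mazur's generation of the Eisenstein ideal by $1+\ell-T_\ell$, and in the non-ordinary case it shows $T_2\in\T^{\an}_{\mathfrak m}$ outright via the Katz weight $1$ argument (the matrix of $T_2$ on the span of $Vf$, $Af$ has determinant $1$, contradicting non-ordinarity). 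None of these mechanisms appears in your plan, and the step you flag as ``the main obstacle'' is exactly the content you would need. Similarly, at odd $p$ your Chebotarev/density argument covers $T_2$ but not $U_N$, which is not a trace of Frobenius (it is ramified at $N$); the paper needs a separate argument that the sign $U_N=\pm1$ is constant over all $\mathfrak a$ above a given $\mathfrak m$, read off from the unramified quotient of the residual representation.

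Second, in the dimension count your identification of the relevant subspace is wrong: the annihilator of $\T^{\an}$ under the pairing consists of forms all of whose odd-power coefficients vanish, i.e.\ $S_2(\Gamma_0(N),\F_2)_{\mathfrak m}\cap\Ker A\theta$, which by Katz--Serre is the image of $V$ (squaring, $f\mapsto f(q^2)$) applied to weight $1$ Katz forms --- not the image of multiplication by the Hasse invariant $A$, which preserves $q$-expansions and hence generally produces forms with nonzero odd coefficients that are \emph{not} killed by $\T^{\an}$. (The two images have the same dimension, so your final count would accidentally agree, but the duality argument as you set it up would not go through.) Finally, you do not address the point the paper spends Section \ref{dimcount} on: the forms $Vf$ are a priori only Katz forms of weight $2$, while $\T$ pairs with classical forms, so one must prove $S_2(\Gamma_0(N),\overline\F_2)^{\Katz}=S_2(\Gamma_0(N),\overline\F_2)$. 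This is Edixhoven's theorem when $N\equiv3\bmod4$, and for $N\equiv1\bmod4$ the paper proves it by raising the level by an auxiliary prime $\ell\equiv3\bmod4$, showing a non-liftable eigenform would have $a_\ell=0$ for all such $\ell$, hence be dihedral induced from $\Q(i)$, which Kedlaya--Medvedovsky rule out at level $\Gamma_0(N)$. Without this classicality input your cokernel computation does not close.
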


The setup of the paper is as follows.  In section \ref{prelims}, we introduce some facts from the literature about modular forms and establish a duality theorem between modular forms and Hecke algebras.  In section \ref{integrality} we prove the first half of the theorem, that $\T^{\an}$ contains $2\T$ as submodules of $\T$, so the quotient $\T/\T^{\an}$ is purely $2$-torsion.  Then in section \ref{dimcount} we use a theorem of Katz to relate the extra elements of $\T$ to weight $1$ modular forms using the duality, and finally establish the equality of Theorem \ref{mainthm} between dimensions.  In section \ref{examples} we conclude with some examples, and some theorems and conjectures we propose based on the work of Cohen-Lenstra and Bhargava. 

\section{Preliminaries}\label{prelims}
\subsection{From $\Z$ to $\Z_2$}
We start by proving that $U_N\in\T^{\an}$, thereby reducing our work to considering $\T^{\an}\subseteq\T^{\an}[T_2]$.

\begin{thm}\label{UNgood}$U_N\in\T^{\an}$.\end{thm}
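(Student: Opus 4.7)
The plan combines Atkin--Lehner theory, strong multiplicity one, and an integrality argument at the prime $2$.

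First, I would show $U_N^2=1$ as an element of $\T$. Since $N$ is prime and $S_2(\Gamma_0(1),\C)=0$, every cusp form in $S_2(\Gamma_0(N))$ is automatically new. By Atkin--Lehner theory for newforms of prime level, weight $2$, and trivial character, each Hecke eigenform $f$ satisfies $a_N(f)=\pm 1$ and therefore $U_N f=\pm f$. Since $\T$ acts faithfully on $S_2(\Gamma_0(N),\overline{\Z})$, which is diagonalizable over $\overline{\Q}$ by Hecke operators, this yields the identity $U_N^2=1$ in $\T$ itself.

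Second, I would invoke strong multiplicity one: two distinct normalized newforms of level $N$ must disagree on $T_p$ for some prime $p$ coprime to $2N$. Consequently the rationalizations coincide, $\T^{\an}\otimes\Q=\T\otimes\Q$, and so $U_N\in\T\otimes\Q$. Concretely, there exist a positive integer $d$ and integers $c_n$ (with $\gcd(n,2N)=1$, only finitely many nonzero) such that $d\cdot U_N = \sum c_n T_n$ in $\T$.

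The main obstacle is promoting this to the claim that one may take $d=1$. I would use the perfect $\Z$-duality $\T\cong\Hom_\Z(S_2(\Gamma_0(N),\Z),\Z)$ established in the rest of section \ref{prelims}, sending $T$ to the functional $f\mapsto a_1(Tf)$. Under this duality $U_N$ corresponds to $f\mapsto a_N(f)$ and $T_n$ corresponds to $f\mapsto a_n(f)$, so the containment $U_N\in\T^{\an}$ becomes the statement that the functional $a_N$ is a $\Z$-linear combination of the functionals $\{a_n : \gcd(n,2N)=1\}$. Away from $2$, the decomposition of $S_2$ into $\pm 1$-eigenspaces for $U_N$ is defined over $\Z[1/2]$, and on each piece $U_N$ acts as a scalar, placing $U_N\in\T^{\an}\otimes\Z[1/2]$.

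The delicate case -- and the step I expect to be the main obstacle -- is at the prime $2$. Here $x^2-1\equiv(x+1)^2\pmod 2$ has a repeated root, so the eigenspace decomposition does not descend and Hensel's lemma cannot be applied to lift a factorization of $U_N$ integrally. A dedicated argument at $2$ is needed to show $U_N\in\T^{\an}\otimes\Z_2$; the natural tool is the Galois pseudorepresentation attached to the universal Hecke eigensystem on $S_2(\Gamma_0(N),\overline{\Z})$, which is unramified outside $2N$ and hence has traces in $\T^{\an}\otimes\Z_2$, together with the local description at $N$ (Steinberg type) from which $U_N$ can be recovered as the unramified Frobenius eigenvalue. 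Combining the $\Z[1/2]$ and $\Z_2$ statements via local-global yields $U_N\in\T^{\an}$.
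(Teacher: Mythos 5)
Your opening moves are fine and essentially parallel the paper's reduction: $U_N^2=1$ in $\T$ by Atkin--Lehner theory at prime level, $\T^{\an}\otimes\Q=\T\otimes\Q$ by strong multiplicity one, so the whole question is an integrality question prime by prime. But the step where you claim $U_N\in\T^{\an}\otimes\Z[1/2]$ is a non sequitur, and it is exactly the place where the paper does its real work. Knowing that the $\pm1$-eigenspace decomposition for $U_N$ is defined over $\Z[1/2]$ only gives you the idempotents $\tfrac{1}{2}(1\pm U_N)$ inside $\T\otimes\Z[1/2]$; it says nothing about whether they lie in the subring $\T^{\an}\otimes\Z[1/2]$. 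Concretely, $U_N\in\T^{\an}\otimes\Z_\ell$ fails precisely when some maximal ideal $\mathfrak{m}$ of $\T^{\an}\otimes\Z_\ell$ lies under two maximal ideals of $\T\otimes\Z_\ell$ on which $U_N$ has opposite signs, i.e.\ when an eigenform with $a_N=+1$ and one with $a_N=-1$ are congruent mod $\ell$ on all $T_n$ with $(n,2N)=1$; nothing in your argument rules such a congruence out. The paper excludes it for odd $\ell$ by attaching the Galois representations to the eigenforms above $\mathfrak{m}$, invoking the local-at-$N$ description (a ramified subspace and an unramified quotient on which $\Frob_N$ acts by $a_N=\pm1$), and using Chebotarev to see that all these representations have the same reduction mod $\mathfrak{m}$; since $1\not\equiv-1\bmod\ell$ for odd $\ell$, the common residual representation determines the sign, so $U_N$ is the constant $\pm1$ on $\T_{\mathfrak{m}}$ and hence lies in $\T^{\an}_{\mathfrak{m}}$. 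Some Galois-theoretic (or comparable) input of this kind is unavoidable at odd primes, and your proposal contains none.

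At $p=2$ the situation is the reverse of what you expect: the paper does not reprove anything there but simply quotes \cite[Lemma 5.1]{NoahT} for $U_N\in\T^{\an}\otimes\Z_2$. Your sketch at $2$ (a pseudorepresentation with traces in $\T^{\an}\otimes\Z_2$ plus the Steinberg shape at $N$) runs directly into the obstruction you can no longer dismiss: mod $2$ the two possible signs $\pm1$ coincide, so residual or trace data over $\T^{\an}_{\mathfrak{m}}$ does not by itself recover the unramified Frobenius eigenvalue, and extracting such an eigenvalue from a pseudorepresentation over a $2$-adic local ring requires a genuine additional argument. So as written the proposal establishes neither the odd-prime part (where the $\Z[1/2]$ claim is unsupported) nor the $2$-adic part (which you correctly flag as the delicate case but do not carry out); the appeal to a perfect duality $\T\cong\Hom_{\Z}(S_2(\Gamma_0(N),\Z),\Z)$ over $\Z$ is also not something the paper provides (it only proves a $2$-adic statement at completions), though that is a side issue.
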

\begin{proof}
It is enough to check that $U_N\in\T^{\an}\tensor\Z_p$ for every $p$: if $\T^{\an}$ and $\T^{\an}[U_N]$ have different ranks as $\Z$-modules, then the $\Z_p$-ranks of $\T^{\an}\tensor\Z_p$ and $\T^{\an}[U_N]\tensor\Z_p=\T^{\an}\tensor\Z_p[U_N]$ are also different for every $p$, contradiction.  On the other hand, if $\text{rank}(\T^{\an})=\text{rank}(\T^{\an}[U_N])$, then the quotient $\T^{\an}[U_N]/\T^{\an}$ is finite.  If it's nontrivial, then for any prime $p$ dividing its order, there is a surjective map $(\T^{\an}[U_N]\tensor\Z_p)/(\T^{\an}\tensor\Z_p)\twoheadrightarrow(\T^{\an}[U_N]/\T^{\an})\tensor\Z_p$ with nontrivial image.  So for this $p$, $\T^{\an}[U_N]\tensor\Z_p\neq\T^{\an}\tensor\Z_p$.  Therefore, we will only check whether $\T^{\an}\tensor\Z_p$ contains $U_N$.  Further, as $\T^{\an}\tensor\Z_p$ is a complete semi-local ring, it splits as a direct sum of its completions at maximal ideals, so it's further enough to check that $U_N$ is in $\T^{\an}_{\mathfrak{m}}$ for the completion $\T^{\an}_{\mathfrak{m}}$ at each maximal ideal $\mathfrak{m}$.

In a previous paper, we proved that $U_N\in\T^{\an}\tensor\Z_2=\T^{\an}_2$ \cite[Lemma 5.1]{NoahT}, so the statement is true for all maximal ideals over $2$.  So let $\ell$ be an odd prime, $\mathfrak{m}$ be a maximal ideal of $\T^{\an}$ over $\ell$, and $\mathfrak{a}$ be a maximal ideal of $\T$ containing $\mathfrak{m}$.

Let $\T_{\mathfrak{a}}$ be the completion of $\T$ with respect to $\mathfrak{a}$, and let $A$ be the integral closure of $\T_{\mathfrak{a}}$ over $\Z_{\ell}$, which can be written as $A=\displaystyle\oplus_i\mathcal{O}_i$ for $\mathcal{O}_i$ finite extensions of $\Z_{\ell}$.  The maps \[\pi_i:\T\rightarrow\T_{\mathfrak{a}}\rightarrow A\rightarrow\mathcal{O}_i\] produce conjugacy classes of eigenforms with coefficients in $\mathcal{O}_i$, with the coefficient $a_{i,j}$ of $q^j$ equal to $\pi_i(T_j)$ if $(j,N)=1$, or $\pi_i(U_j)$ if $N|j$.  These are newforms as $N$ is prime, and there are no weight $2$ level $1$ forms.  By Eichler-Deligne-Shimura-Serre there are representations $\rho_i:G_{\Q}\rightarrow\GL_2(\mathcal{O}_i)$, unramified away from $\ell N$, so that $\Tr(\rho_i(\Frob_{\ell}))=a_{i,p}$ for all primes $p\nmid \ell N$.

\cite[Theorem 3.1(e)]{MR1605752} describes the shape of the local-at-$N$ representation:\[\rho_i|_{G_{\Q_N}}=\begin{pmatrix}\epsilon\chi&*\\ 0&\chi\end{pmatrix}\]where $\chi$ is the unramified representation taking $\Frob_N$ to $a_{i,N}$ and $\epsilon$ is the $N$-adic cyclotomic character.  Additionally, $\det\rho_i=\epsilon$, so $\chi^2$ is identically $1$ and $a_{i,N}$ is equal to $1$ or $-1$ for each $i$.  We show that $a_{i,N}$ is equal among all $i$ over all $\mathfrak{a}$ containing $\mathfrak{m}$, so that the image of $U_N$ in $\T_{\mathfrak{a}}$ is constantly $1$ or $-1$ over all $\mathfrak{a}$, and hence, in $\T_{\mathfrak{m}}=\oplus_{\mathfrak{m}\subseteq\mathfrak{a}}\T_{\mathfrak{a}}$, is inside $\T^{\an}_{\mathfrak{m}}$.

By the Chebotarev density theorem, a representation is determined up to semisimplification and conjugation by its trace on the Frobenius elements of unramified primes.  The $\rho_i(\Frob_p)$ have trace equal to $a_{i,p}$, which is the image of $T_p$ under $\pi_i$.  Because $\mathfrak{m}$ is contained in $\mathfrak{a}$ for all $\mathfrak{a}$, the image of $T_p$ under reduction of $\T^{\an}\bmod\mathfrak{m}$ is the same as the reduction of $a_{i,p}\bmod\mathfrak{a}$.  Therefore, the semisimplifications of the reductions of $\rho_i$ over all $i$ and all $\mathfrak{a}$ are all isomorphic.  But we can deduce the value of $a_{i,N}$ from the reduction of $\rho_i\bmod\mathfrak{a}$, because $\rho_i|_{G_{\Q_N}}$ has an unramified quotient and a ramified subspace, and the same is true for the reduction $\bmod\mathfrak{a}$ as $\ell\neq2$.  So the image of the Frobenius on the unramified quotient is either $1$ or $-1$ for one (and hence every) $\rho_i$, and therefore $a_{i,N}$ does not depend on $i$ or $\mathfrak{a}$, only on $\mathfrak{m}$.  So $U_N$ lies in $\T^{\an}_{\mathfrak{m}}$ for all $\mathfrak{m}$, and we're done.\end{proof}

We can now reduce from forms over $\overline{\Z}$ to forms over $\overline{\Z}_2$.  To do this, we first recall \cite[Lemma, pp. 491]{MR1333035} which says that if $\T^1$ is the Hecke algebra over $\Z$ corresponding to level $\Gamma_1(N)$ forms and $\T^2$ is the subalgebra of operators relatively prime to $2$, that $\T^2$ has $2$-power index in $\T^1$.  As the algebras $\T$ and $\T^{\an}=\T^{\an}[U_N]$ are quotients of $\T^1$ and $\T^2$ of this lemma, the same is true for $\T$ and $\T^{\an}$.  (Alternatively, with a similar argument to the proof of Theorem \ref{UNgood}, we can check that $T_2$ is contained in all completions at maximal ideals of $\T^{\an}\left[\frac{1}{2}\right]$.  This is true as $2$ is unramified in, and $T_2$ is a trace of, the modular representations over primes other than $2$, so Chebotarev and completeness of $\T^{\an}_{\mathfrak{m}}$ show that $T_2\in\T^{\an}_{\mathfrak{m}}$.)  So we can calculate the index of $\T^{\an}\tensor\Z_2$ inside $\T\tensor\Z_2$, and by abuse of notation begin to call these $\T^{\an}$ and $\T$ instead.  We know that $\T$ and $\T^{\an}$ are semi-local rings, and as such, they can be written as a direct sum of their completions: \[\T=\bigoplus_{\mathfrak{a}\subset\T}\T_{\mathfrak{a}},\qquad\text{and}\qquad\T^{\an}=\bigoplus_{\mathfrak{m}\subset\T^{\an}}\T^{\an}_{\mathfrak{m}}.\]

Additionally, because the $\Z_2$-ranks of $\T$ and $\T^{\an}$ are equal, $T_2\in\T\tensor\Q_2=\T^{\an}\tensor\Q_2=\T^{\an}\left[\frac{1}{2}\right]$, and hence maps $\T^{\an}\rightarrow K$ where $K$ is a finite extension of $\Q_2$ can be uniquely extended to maps $\T\rightarrow K$.  This means that modular forms are rigid in characteristic $0$: we can determine the image of $T_2$ from the image of the remaining operators, and hence from any modular representation $\rho_f:G_{\Q}\rightarrow\GL_2(K)$ we may determine the entire form $f$.  We say that $\rho$ is ordinary if the restriction $\rho|_{D_2}$ of $\rho$ to the decomposition group at $2$ is reducible, and we say that an eigenform is ordinary if $a_2$ is a unit mod 2.  The next theorem describes the shape of $\rho_f$ at $2$:
\begin{thm}[{\cite[Theorem 2]{Wiles1988}}]\label{wiles}If $f$ is an ordinary $2$-adic form, then $\rho_f|_{D_2}$, the restriction of $\rho_f$ to the decomposition group at a prime above $2$, is of the shape\[\rho_f|_{D_2}\sim\begin{pmatrix}\chi\lambda^{-1}&*\\ 0&\lambda\end{pmatrix}\]for $\lambda$ the unramified character $G_{\Q_2}\rightarrow\overline{\Z}_2^{\times}$ taking $\Frob_2$ to the unit root of $X^2-a_2X+2$, and $\chi$ is the $2$-adic cyclotomic character.\end{thm}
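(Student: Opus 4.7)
The plan is to read off the shape of $\rho_f|_{D_2}$ from the crystalline description of $\rho_f$ at $2$ combined with the ordinarity hypothesis. Because $2 \nmid N$, the eigenform $f$ has good reduction at $2$, so $\rho_f|_{D_2}$ is crystalline with Hodge--Tate weights $\{0,1\}$, and the characteristic polynomial of the crystalline Frobenius on $D_{\mathrm{cris}}(\rho_f)$ equals the Hecke polynomial $X^2 - a_2 X + 2$ (Eichler--Shimura). Also $\det \rho_f = \chi$ because $f$ has weight $2$ and trivial nebentypus, which will pin down the sub-character once the quotient is identified.

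Ordinarity says $a_2 \in \overline{\Z}_2^\times$, so the Newton polygon of $X^2 - a_2 X + 2$ has slopes $0$ and $1$; by Hensel's lemma there is a unique unit root $\alpha$ and a non-unit root $\beta = 2/\alpha$. Since the Newton polygon meets the Hodge polygon at the break point, a Hodge--Newton decomposition applies: the unit-root line for Frobenius on $D_{\mathrm{cris}}$ is transverse to the nontrivial step of the Hodge filtration, and the crystalline--\'etale comparison translates this transversality into an unramified quotient of $\rho_f|_{D_2}$ on which $\Frob_2$ acts by $\alpha$. This is exactly the character $\lambda$ of the statement, and the determinant condition then forces the sub-character to be $\chi\lambda^{-1}$. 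The extension class $*$ need not split.

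The main obstacle is identifying the correct orientation --- that the \emph{unit} root $\alpha$ governs the unramified quotient rather than the ramified sub. This is what the Hodge--Newton filtration on $D_{\mathrm{cris}}$ provides, using the compatibility of Frobenius with the Hodge filtration. Wiles' original argument sidesteps $p$-adic Hodge theory by working directly with the ordinary $2$-divisible group attached to the $f$-isotypic piece of $J_0(N)$: the connected--\'etale sequence supplies a canonical two-step filtration whose \'etale quotient is unramified with $\Frob_2$ acting by $\alpha$, while the connected part, via Cartier duality, contributes $\chi\lambda^{-1}$. A cleaner but essentially equivalent alternative is to embed $f$ in an ordinary Hida family and transport the known shape of the family's Galois representation to the specialization $f$.
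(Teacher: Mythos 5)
This statement is not proved in the paper at all: it is imported verbatim as Theorem 2 of Wiles' 1988 paper on ordinary $\lambda$-adic representations, and the paper uses it as a black box. So there is no internal proof to compare against; what can be judged is whether your sketch is a sound reconstruction of the known argument, and it essentially is. Your crystalline route is the standard modern one: since $2\nmid N$, $\rho_f|_{D_2}$ is crystalline with Hodge--Tate weights $\{0,1\}$ and the Frobenius on $D_{\mathrm{cris}}$ has characteristic polynomial $X^2-a_2X+2$; ordinarity gives a unit root $\alpha$, and the ``orientation'' issue you flag (unit root on the unramified \emph{quotient}) is settled precisely by weak admissibility, which forbids the slope-$0$ Frobenius eigenline from lying in $\mathrm{Fil}^1$. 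Be aware, though, that this version leans on two heavy inputs you cite only implicitly: local--global compatibility at $p=2$ (that the crystalline Frobenius polynomial really is the Hecke polynomial) and the equivalence between weak admissibility and coming from a Galois representation; neither is an issue at $p=2$, but they should be named. Your second route --- the connected--\'etale sequence of the ordinary $2$-divisible group of the $f$-isotypic part of $J_0(N)$, with the Eichler--Shimura relation identifying the Frobenius eigenvalue on the \'etale quotient as the unit root and Cartier duality (equivalently $\det\rho_f=\chi$) giving the sub-character $\chi\lambda^{-1}$ --- is the classical weight-$2$ argument (going back to Deligne; Wiles' Theorem 2 is the extension of this to Hida families), and is the cleaner proof at the level of generality actually used in this paper. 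The Hida-family alternative you mention is close to circular here, since the shape of the family's representation is usually deduced from exactly this weight-$2$ statement. In short: correct in outline, appropriately sketchy for a theorem the paper itself only cites, with the main unstated dependencies being local--global compatibility at $2$ (or, in the geometric argument, good reduction of $J_0(N)$ at $2$ plus Eichler--Shimura).
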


\subsection{A Duality Theorem}
In this section, we will compute the Pontryagin dual of one of the summands in $\T$ with the following lemma.  Let $\mathfrak{a}$ be any maximal ideal of $\T$ and let \[S_2(\Gamma_0(N),\Z_2)_{\mathfrak{a}}=e\cdot S_2(\Gamma_0(N),\Z_2)\] where $e$ is the projector $\T\rightarrow\T_{\mathfrak{a}}$.
\begin{lem}\label{duality}The Pontryagin dual of $\T_{\mathfrak{a}}$ is $M=\displaystyle\lim_{\longrightarrow}S_2(\Gamma_0(N),\Z_2)_{\mathfrak{a}}/(2^n)$ where the transition maps are multiplication by $2$.\end{lem}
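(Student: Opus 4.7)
The plan is to exploit the classical $q$-expansion duality between weight-$2$ cusp forms and the Hecke algebra, restrict it to the $\mathfrak{a}$-isotypic summand, and then pass from an integral pairing to a Pontryagin-dual statement by tensoring with $\Q_2/\Z_2$.

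First I would invoke the standard perfect pairing $S_2(\Gamma_0(N),\Z_2)\times\T\to\Z_2$ defined by $(f,T)\mapsto a_1(Tf)$, where $a_1(\cdot)$ denotes the coefficient of $q$ in the $q$-expansion. This pairing is $\T$-bilinear and induces a $\T$-module isomorphism $S_2(\Gamma_0(N),\Z_2)\cong\Hom_{\Z_2}(\T,\Z_2)$. Applying the idempotent $e\colon\T\to\T_{\mathfrak{a}}$ on both sides then gives the localized isomorphism $S_2(\Gamma_0(N),\Z_2)_{\mathfrak{a}}\cong\Hom_{\Z_2}(\T_{\mathfrak{a}},\Z_2)$.

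Next I would tensor this isomorphism over $\Z_2$ with $\Q_2/\Z_2$. Since $\T$ embeds in $\End(S_2(\Gamma_0(N),\overline{\Z}))$, it is $\Z$-torsion-free and hence finitely generated free over $\Z$; in particular each summand $\T_{\mathfrak{a}}$ is a finitely generated free $\Z_2$-module. For such a module the natural map $\Hom_{\Z_2}(\T_{\mathfrak{a}},\Z_2)\otimes_{\Z_2}\Q_2/\Z_2\to\Hom_{\Z_2}(\T_{\mathfrak{a}},\Q_2/\Z_2)$ is an isomorphism, and the right-hand side is precisely the Pontryagin dual of $\T_{\mathfrak{a}}$.

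It remains to identify $S_2(\Gamma_0(N),\Z_2)_{\mathfrak{a}}\otimes_{\Z_2}\Q_2/\Z_2$ with $M$. Writing $\Q_2/\Z_2=\varinjlim_n 2^{-n}\Z_2/\Z_2$ with the inclusions as transition maps, and identifying $2^{-n}\Z_2/\Z_2\cong\Z/2^n\Z$ via $2^{-n}a\mapsto a$, these transition maps become multiplication by $2$. Since tensor product commutes with direct limits, tensoring with $S_2(\Gamma_0(N),\Z_2)_{\mathfrak{a}}$ produces exactly $\varinjlim_n S_2(\Gamma_0(N),\Z_2)_{\mathfrak{a}}/(2^n)=M$, completing the chain of isomorphisms. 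I do not anticipate a serious obstacle; the only delicate point is checking that the global $q$-expansion pairing restricts to a perfect pairing on the $\mathfrak{a}$-component, but this is automatic from $\T$-equivariance of the pairing together with the idempotent decomposition $\T=\bigoplus_{\mathfrak{a}}\T_{\mathfrak{a}}$.
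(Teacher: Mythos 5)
Your proposal is correct, but it proves the lemma by a genuinely different route than the paper. You deduce everything formally from the classical integral $q$-expansion duality $S_2(\Gamma_0(N),\Z)\cong\Hom_{\Z}(\T,\Z)$, $(f,T)\mapsto a_1(Tf)$, then cut down by the idempotent $e$ and tensor with $\Q_2/\Z_2$, using that $\T_{\mathfrak{a}}$ is finite free over $\Z_2$ and that tensoring commutes with the direct limit defining $M$; all of these steps are sound, and since the paper defines $S_2(\Gamma_0(N),\Z_2)$ naively as $S_2(\Gamma_0(N),\Z)\otimes\Z_2$, no mod-$2$ (Katz versus classical) subtlety is hiding in the cited duality --- those issues only enter later, in Theorem \ref{nokatz}. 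The paper instead argues self-containedly on the dual side: it shows $M$ and $M^{\vee}$ are faithful $\T_{\mathfrak{a}}$-modules, that $\T_{\mathfrak{a}}$ injects into $M^{\vee}$ via $(T,f)\mapsto a_1(Tf)$, and --- the key local computation --- that $M[\mathfrak{a}]$ is one-dimensional over $\T/\mathfrak{a}$ (upper bound from the $q$-expansion principle, lower bound from the trace form built out of $T_1q+T_2q^2+\cdots$), so that Nakayama makes $M^{\vee}$ a cyclic faithful module and hence isomorphic to $\T_{\mathfrak{a}}$. What your route buys is brevity and transparency, outsourcing the surjectivity statement to a standard theorem; what the paper's route buys is independence from that citation and an explicit residual duality between $M[\mathfrak{a}]$ and $\T_{\mathfrak{a}}/\mathfrak{a}$, which is essentially the mod-$2$ perfect pairing reused in Section \ref{dimcount}. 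If you keep your version, replace the phrase ``standard perfect pairing'' with a precise reference or with the short argument that the $T_n$ span $\T$ over $\Z$ and that any $\Z$-valued functional on $\T$ yields, via complex duality, a cusp form with integral $q$-expansion; as stated, that invocation is the only unproved ingredient.
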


\begin{proof}First, we note that $\T_{\mathfrak{a}}$ acts on $M$ because $\T_{\mathfrak{a}}$ acts compatibly on each level.  If any element $T\in\T_{\mathfrak{a}}$ acts trivially on $M$, then on any given modular form in $S_2(\Gamma_0(N),\Z_2)_{\mathfrak{a}}$, it acts by arbitrarily high powers of $2$, and hence acts as $0$.  Then $T$ acts trivially on the rest of $S_2(\Gamma_0(N),\Z_2)$, so $T$ is the $0$ endomorphism.  Therefore, $M$ is a faithful $\T_{\mathfrak{a}}$-module.

We also know that $M[\mathfrak{a}]$, the elements of $M$ killed by all of $\mathfrak{a}$, is a subspace of $S_2(\Gamma_0(N),\Z_2)_{\mathfrak{a}}/(2)=S_2(\Gamma_0(N), \F_2)_{\mathfrak{a}}$.  It is a vector space over $\T/\mathfrak{a}$, although through the action of $\T$, not by multiplication on the coefficients.  We explain why it's a $1$-dimensional $\T/\mathfrak{a}$-vector space.  The map \[S_2(\Gamma_0(N), \F_2)\rightarrow\Hom(\T, \F_2),\quad f\mapsto\phi_f:T_n\rightarrow a_n\] is injective by the $q$-expansion principle.  The forms killed by $\mathfrak{a}$ must correspond to maps factoring through $\T/\mathfrak{a}$, so the space of forms is at most the dimension of $\Hom(\T/\mathfrak{a},\F_2)=\dim_{\F_2}\T/\mathfrak{a}$.  So the dimension as a $\T/\mathfrak{a}$-vector space is at most $1$.

On the other hand, there is at least $1$ form in $M[\mathfrak{a}]$, because we may take the form $T_1q+T_2q^2+T_3q^3+\ldots\in S_2(\Gamma_0(N),\T/\mathfrak{a})$ and consider its image under the trace map $\T/\mathfrak{a}\rightarrow\F_2$.  This is nonzero because the trace map is nondegenerate, and because the Hecke operators generate $\T$ additively.  This is in the kernel of $\mathfrak{a}$ because the trace of a form is just the sum of its conjugates, and for any expression in $\mathfrak{a}$ in terms of the Hecke operators with coefficients in $\F_2$, because its application to the original form is $0$ by definition, its application to any of the form's conjugates must also be $0$ (because the Hecke operators act $\F_2$-linearly on a form's coefficients and hence commute with Galois conjugation), and so too must its application to the sum.  Because the trace form has coefficients in $\F_2$, we've found a nontrivial form in $M[\mathfrak{a}]$, and this must be dimension $1$ as required.

We consider the Pontryagin dual of $M$: as $M$ is a $\Z_2$-module, the image of any map $M\rightarrow\Q/\Z$ must land in $\Q_2/\Z_2$.  So let $M^{\vee}=\Hom_{\Z_2}(M,\Q_2/\Z_2)$.  We endow this with a $\T_{\mathfrak{a}}$-module structure by letting $(T\phi)(f)=\phi(Tf)$.  Because $S_2(\Gamma_0(N),\Z_2)_{\mathfrak{a}}\simeq\Z_2^k$ for some $k$ because it is torsion free, $M\simeq(\Q_2/\Z_2)^k$ as a $\Z_2$ module.  So if $\phi(f)=0$ for all $\phi\in M^{\vee}$, we know that $f=0$.  If $T\phi=0$ for all $\phi$, then $\phi(Tf)=0$ for all $\phi$ and $f$, and so $Tf=0$ for all $f$, and $T=0$.  So $M^{\vee}$ is also a faithful $\T_{\mathfrak{a}}$-module.

Further, $\T_{\mathfrak{a}}$ injects into $M^{\vee}$: we can rewrite \[M=\displaystyle\lim_{\longrightarrow}\frac{1}{2^n}S_2(\Gamma_0(N),\Z_2)_\mathfrak{a}/S_2(\Gamma_0(N),\Z_2)_{\mathfrak{a}}\] where the transition maps are inclusion.  Then the $\T_{\mathfrak{a}}\times M\rightarrow \Q_2/\Z_2$ as $(T, f)\rightarrow a_1(Tf)$ defines the injection.  By Nakayama's lemma and the duality of $M[\mathfrak{a}]$ and $M^{\vee}/\mathfrak{a}$, the minimal number of generators of $M^{\vee}$ as a $\T_{\mathfrak{a}}$-module is $1$.  So we've proven that $M^{\vee}\simeq\T_{\mathfrak{a}}$.\end{proof}

We may use Pontryagin Duality to find that the dual to $T_{\mathfrak{a}}/2=M^{\vee}/2$ is $M[2]$, which is exactly $S_2(\Gamma_0(N),\Z_2)_{\mathfrak{a}}/(2)=S_2(\Gamma_0(N),\F_2)_{\mathfrak{a}}$.  Thus we obtain a perfect pairing \[T_{\mathfrak{a}}/2\times S_2(\Gamma_0(N),\F_2)_{\mathfrak{a}}\rightarrow \F_2, \qquad (T, f)\rightarrow a_1(Tf).\]We may sum these pairings over all $\mathfrak{a}$, because Hecke operators and forms with incompatible maximal ideals annihilate each other.  Therefore we obtain a perfect pairing $\T/2\times S_2(\Gamma_0(N),\F_2)\rightarrow \F_2$.

\section{$2T_2$ is integral}\label{integrality}
In this section we prove the following lemma:
\begin{lem}\label{doubleT}For any element $T\in\T$, the element $2T\in\T$ lies inside $\T^{\an}$.\end{lem}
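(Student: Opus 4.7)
The plan is to show that $2T \in \T^{\an}$ for every $T \in \T$, which by the semi-local decompositions $\T^{\an} = \bigoplus_\mathfrak{m} \T^{\an}_\mathfrak{m}$ and $\T_\mathfrak{m} = \bigoplus_{\mathfrak{a} \supseteq \mathfrak{m}} \T_\mathfrak{a}$ established at the end of Section \ref{prelims} amounts to proving $2 \T_\mathfrak{m} \subseteq \T^{\an}_\mathfrak{m}$ for each maximal ideal $\mathfrak{m}$ of $\T^{\an}$. Since $\T_\mathfrak{m}$ is generated as a $\T^{\an}_\mathfrak{m}$-algebra by $T_2$, I focus on this generator (and its $\T^{\an}_\mathfrak{m}$-polynomial expressions in $\T_\mathfrak{m}$), aiming to deposit $2T_2$, and more generally $2T$, into $\T^{\an}_\mathfrak{m}$.

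The main tool is Wiles' theorem (Theorem \ref{wiles}). For each character $\pi_i \colon \T_\mathfrak{m} \to \mathcal{O}_i$ corresponding to an eigenform $f_i$ with Galois representation $\rho_i$, the ordinary case provides a unit root $\lambda_i \in \mathcal{O}_i^\times$ of $X^2 - a_2(f_i) X + 2$, so that $\pi_i(T_2) = a_2(f_i) = \lambda_i + 2 \lambda_i^{-1}$ and in particular $a_2 - \lambda_i = 2\lambda_i^{-1} \in 2\mathcal{O}_i$. The residue $\bar{\lambda}$, being the Frobenius eigenvalue on the unramified quotient of $\bar{\rho}_\mathfrak{m}|_{D_2}$, is an invariant of the residual representation $\bar{\rho}_\mathfrak{m}$; by Chebotarev, the semisimplification of $\bar{\rho}_\mathfrak{m}$ is determined by mod-$\mathfrak{m}$ traces at odd primes, hence by $\T^{\an}_\mathfrak{m}/\mathfrak{m}$, so $\bar{\lambda}$ lies in the residue field $k_\mathfrak{m}$ of $\T^{\an}_\mathfrak{m}$ and lifts to some $X \in \T^{\an}_\mathfrak{m}$. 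I would then exploit the precise form of Wiles' quadratic relation $\lambda_i(a_2 - \lambda_i) = 2$ to show that $2T_2$ differs from an explicit polynomial expression in $X$ (built using $X$ and elements of $\T^{\an}_\mathfrak{m}$) by a further $2$-divisible correction in $\T_\mathfrak{m}$, and iterate this bootstrap using the finite $\Z_2$-exponent of $\T_\mathfrak{m}/\T^{\an}_\mathfrak{m}$ to conclude $2T_2 \in \T^{\an}_\mathfrak{m}$. The analogous argument extends from $T_2$ to all $T \in \T_\mathfrak{m}$ since powers of $T_2$ generate $\T_\mathfrak{m}$ over $\T^{\an}_\mathfrak{m}$ and the residual analysis applies uniformly.

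The main obstacle is that $\lambda_i$ itself varies with $i$ and does not come from a single element of $\T^{\an}_\mathfrak{m}$; only its residue $\bar{\lambda}$ is a universal invariant. A naive attempt to lift $\lambda_i$ to some $Y \in \T^{\an}_\mathfrak{m}$ and deduce $T_2 - Y \in 2\T_\mathfrak{m}$ would give $\T_\mathfrak{m} = \T^{\an}_\mathfrak{m} + 2\T_\mathfrak{m}$, and hence by Nakayama's lemma $\T_\mathfrak{m} = \T^{\an}_\mathfrak{m}$, contradicting the $c > 0$ cases of Theorem \ref{mainthm}. The correct argument must therefore use the full quadratic relation $\lambda_i^2 - a_2 \lambda_i + 2 = 0$, not just its residue, to track $2a_2$ (rather than a lift of $\lambda_i$) as the object that can be pulled into $\T^{\an}_\mathfrak{m}$. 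Separately, the non-ordinary case $\bar{a}_2 = 0$, where Wiles' theorem does not directly apply in the stated form, requires a parallel treatment: here $a_2(f_i) \in 2\mathcal{O}_i$ for every $i$, so $T_2$ is already in $2\T_\mathfrak{m}$ and $2T_2 \in 4\T_\mathfrak{m}$, and this extra $2$-divisibility must be absorbed into $\T^{\an}_\mathfrak{m}$ by a similar but distinct bootstrap.
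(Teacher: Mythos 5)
There is a genuine gap: your plan correctly identifies the central obstacle (that the unit root $\lambda_i$ varies with $i$ and only its residue is a universal invariant), but the step you offer to overcome it --- ``exploit the quadratic relation and iterate a bootstrap using the finite exponent of $\T_{\mathfrak{m}}/\T^{\an}_{\mathfrak{m}}$'' --- is never actually specified, and no mechanism you set up produces the inductive improvement. Starting from $2T_2\in 2\T_{\mathfrak{m}}$, nothing in the coordinatewise relations $\lambda_i^2-a_{2,i}\lambda_i+2=0$ together with a lift $X$ of $\bar\lambda$ lets you upgrade $2T_2\in\T^{\an}_{\mathfrak{m}}+2^{k}\T_{\mathfrak{m}}$ to $k+1$; that upgrade is exactly the content of the lemma. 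The paper's solution is different in kind: for non-Eisenstein $\mathfrak{m}$ it first invokes Carayol's theorem (Lemma \ref{carayollike}, using residual irreducibility and Chebotarev) to produce a representation $\rho:G_{\Q}\rightarrow\GL_2(\T^{\an}_{\mathfrak{m}})$ valued in the anemic algebra itself, so that traces of \emph{arbitrary} group elements --- not just Frobenii at unramified primes --- lie in $\T^{\an}_{\mathfrak{m}}$. Then, choosing elements $g,h$ of the decomposition group at $2$ which are Frobenius lifts with cyclotomic character $+1$ and $-1$ respectively, Wiles' description gives in every coordinate $2a_{2,i}=3\Tr\rho(g)-\Tr\rho(h)$; the dependence on $\lambda_i$ cancels, and $2T_2$ is exhibited as a single trace expression in $\T^{\an}_{\mathfrak{m}}$. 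The further identity $T_2^2=\Tr\rho(g)T_2+(\Tr\rho(gh)-\Tr\rho(g^2)-1)$ is what lets one pass from $2T_2$ to $2T$ for a general $T\in\T^{\an}[T_2]$; your closing sentence asserts this extension but gives no such relation.

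Two further cases in your sketch are incomplete or wrong. You never treat the residually reducible (Eisenstein) case, where Carayol-type gluing is unavailable; the paper handles it via Mazur's result that the Eisenstein ideal is generated by $1+\ell-T_\ell$, so $T_2$ already lies in $\T^{\an}_{\mathfrak{m}}$. In the non-ordinary case your claim that $a_2(f_i)\in 2\mathcal{O}_i$ is false when $\mathcal{O}_i$ is ramified over $\Z_2$ (non-ordinarity only puts $a_{2,i}$ in the maximal ideal), and even granting it, ``$2T_2\in 4\T_{\mathfrak{m}}$'' does not place $2T_2$ in $\T^{\an}_{\mathfrak{m}}$ without already knowing a bound like $4\T_{\mathfrak{m}}\subseteq\T^{\an}_{\mathfrak{m}}$, which is again the lemma itself. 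The paper instead proves the stronger statement $T_2\in\T^{\an}_{\mathfrak{m}}$ (Theorem \ref{nonord}) by a completely different argument: Nakayama plus the duality pairing of Lemma \ref{duality} reduce it to excluding a mod $2$ form supported in even powers of $q$, Katz's theorem writes such a form as $Vf$ for a weight $1$ Katz form $f$, and the computed matrix of $T_2$ on the span of $Vf$ and $Af$ has determinant $1$, contradicting non-ordinarity. None of these ingredients appear in your proposal, so as it stands it does not constitute a proof.
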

First we prove a lemma describing the image of the representation corresponding to a non-Eisenstein ideal.
\begin{lem}\label{carayollike}Suppose $\mathfrak{m}$ does not contain the Eisenstein ideal.  Then there is a representation \[\rho:G_{\Q}\rightarrow\GL_2(\T^{\an}_{\mathfrak{m}}).\] that is unramified outside $2N$, and which satisfies $\Tr(\rho(\Frob_{\ell}))=T_{\ell}$ for $\ell\nmid 2N$.\end{lem}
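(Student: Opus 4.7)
The plan is to build $\rho$ by first assembling the individual Eichler--Deligne--Shimura representations coming from every characteristic zero eigenform contributing to $\T^{\an}_{\mathfrak{m}}$, then showing the resulting trace-and-determinant data (a pseudo-representation) takes values in $\T^{\an}_{\mathfrak{m}}$ itself, and finally invoking a Carayol-style lifting theorem to upgrade this to a genuine representation. This is exactly the template used by Mazur and Carayol whenever one wants a Galois representation valued in a deformation/Hecke ring rather than in one of its quotients.

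More concretely, mimic the setup in the proof of Theorem \ref{UNgood}: embed $\T^{\an}_{\mathfrak{m}} \hookrightarrow A = \bigoplus_i \mathcal{O}_i$ into its integral closure, where each $\mathcal{O}_i$ is the ring of integers of a finite extension $K_i/\Q_2$ cut out by a minimal prime, and $\pi_i\colon \T^{\an}_{\mathfrak{m}} \to \mathcal{O}_i$ corresponds to (a conjugacy class of) a newform $f_i$ with $a_\ell(f_i) = \pi_i(T_\ell)$ for $\ell \nmid 2N$. Eichler--Deligne--Shimura--Serre produces $\rho_i\colon G_{\Q} \to \GL_2(\mathcal{O}_i)$, unramified outside $2N$, with $\Tr\rho_i(\Frob_\ell) = a_{i,\ell}$ and $\det \rho_i = \epsilon$ the cyclotomic character. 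Assembling these yields $\rho'\colon G_{\Q} \to \GL_2(A)$; by Chebotarev its trace function $\tau\colon G_{\Q}\to A$ is determined by its values on unramified Frobenii, and on these $\tau(\Frob_\ell) = T_\ell \in \T^{\an}_{\mathfrak{m}}$. Since $\T^{\an}_{\mathfrak{m}}$ is complete hence closed in $A$, and $G_{\Q}$ is topologically generated (via Chebotarev) by such Frobenii, $\tau$ lands in $\T^{\an}_{\mathfrak{m}}$. The determinant is the cyclotomic character, which already takes values in $\Z_2 \subseteq \T^{\an}_{\mathfrak{m}}$.

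Now I have a two-dimensional pseudo-representation (or Chenevier determinant) of $G_{\Q}$ valued in the complete local $\Z_2$-algebra $\T^{\an}_{\mathfrak{m}}$. The hypothesis that $\mathfrak{m}$ is non-Eisenstein is precisely what ensures the residual representation $\bar\rho\colon G_{\Q} \to \GL_2(\T^{\an}_{\mathfrak{m}}/\mathfrak{m})$ associated to this pseudo-representation is absolutely irreducible, since otherwise $\bar\rho^{ss}$ would be a sum of two characters whose traces would force $T_\ell - 1 - \ell \in \mathfrak{m}$ for all $\ell \nmid 2N$. Having residual absolute irreducibility, the classical theorem of Nyssen/Rouquier (the relevant ``Carayol-like'' input) lifts the pseudo-representation uniquely to a genuine representation $\rho\colon G_{\Q} \to \GL_2(\T^{\an}_{\mathfrak{m}})$ with the prescribed trace and determinant. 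Unramifiedness outside $2N$ is inherited from each $\rho_i$, as ramification of $\rho$ at a prime $p \nmid 2N$ would be detected by the trace on inertia, but the traces agree with those of $\rho'$, which is unramified there.

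The main obstacle, and the only nonformal step, is the pseudo-representation-to-representation descent. The essential input one must verify is absolute irreducibility of $\bar\rho$, i.e.\ the translation between the non-Eisenstein hypothesis on $\mathfrak{m}$ and irreducibility of the residual Galois representation; the rest is bookkeeping with Chebotarev and completeness. One should be mindful that the statement only needs the restriction of $\rho$ to geometric Frobenii at primes $\ell\nmid 2N$ to produce $T_\ell$, matching the fact that $\T^{\an}$ itself is generated only by such $T_\ell$, so no separate control at $T_2$ or $U_N$ is required.
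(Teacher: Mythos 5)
Your argument follows the paper's proof almost step for step: the same embedding of $\T^{\an}_{\mathfrak{m}}$ into its normalization $\prod_i\mathcal{O}_i$, the same Eichler--Deligne--Shimura--Serre representations $\rho'_i$ assembled into $\rho'$, the same Chebotarev-plus-completeness argument that the trace lands in $\T^{\an}_{\mathfrak{m}}$, and the same use of the non-Eisenstein hypothesis to get residual irreducibility. The only divergence is the final descent step. The paper keeps the honest representation $\rho'$ over the normalization and applies Carayol's theorem (the cited \cite[Theorem 2]{MR1279611}), which says that a representation over an algebra whose traces lie in a complete local subring, and whose residual representation is absolutely irreducible, is conjugate to a representation over that subring; this has no restriction on the residue characteristic. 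You instead discard $\rho'$, keep only the trace and determinant as a pseudo-representation, and re-lift via Nyssen--Rouquier. That is where you need to be careful: the classical Wiles/Taylor pseudo-representation formalism and the Nyssen--Rouquier lifting theorems require either a complex conjugation with distinct eigenvalues mod $\mathfrak{m}$ or $d!$ invertible, and here $d=2$ and the residue characteristic is $2$, so those results do not apply as stated. The fix is exactly your parenthetical: work with Chenevier's determinants, for which the residually-absolutely-irreducible lifting theorem over a Henselian local ring does hold; for $d=2$ one checks that the pair $(T,D)$ satisfies the determinant identity $T(gh)+D(g)T(g^{-1}h)=T(g)T(h)$ with values in $\T^{\an}_{\mathfrak{m}}$, which follows coordinatewise in $\prod_i\mathcal{O}_i$. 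With that substitution your route is sound, though Carayol's descent (the paper's choice) is the more economical tool here since it sidesteps the characteristic-$2$ pseudo-representation issues entirely. One further small point: your one-line reason that non-Eisenstein forces residual irreducibility (that two residual characters would give $T_\ell-1-\ell\in\mathfrak{m}$) implicitly needs the characters to be unramified everywhere and hence trivial (using the local shape at $N$ and that $\overline{\F}_2^{\times}$ has odd order); the paper asserts this at the same level of detail, but you should be aware the step is not purely formal.
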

\begin{proof}
Let $A=\T^{\an}_{\mathfrak{m}}$ and $A'$ is its integral closure over $\Z_2$, which can be written as the product $\prod_i \mathcal{O}_i$ of a collection of integer rings.  We know that there exist representations $\rho'_i: G_{\Q}\rightarrow \prod_i \GL_2(\mathcal{O}_i)$, by Eichler-Shimura-Deligne-Serre.  The image is $\GL_2(\mathcal{O}_i)$, because $G_{\Q}$ is compact, and we may choose an invariant lattice on which it acts.  These $\rho'_i$ combine to give a representation \[\rho'=\prod_i \rho'_i: G_{\Q}\rightarrow \prod_i \GL_2(\mathcal{O}_i).\]

We know that the traces of the representations at $\Frob_{\ell}$ are the images of $T_{\ell}$ for all $\ell\nmid pN$, so the trace of $\rho'$ by Chebotarev Density always lands in $\T^{\an}_{\mathfrak{m}}$.  We assumed $\mathfrak{m}$ did not contain the Eisenstein ideal, so we know that each $\rho'_i$, and therefore the full $\rho'$, is residually irreducible.  By \cite[Theorem 2]{MR1279611} we find that $\rho'$ is similar to a representation \[\rho:G_{\Q}\rightarrow\GL_2(\T^{\an}_{\mathfrak{m}}).\]\end{proof}
To prove Lemma \ref{doubleT}, we look at the three different possible cases and deduce that the projection of $2T_2$ to $\T_{\mathfrak{a}}$ lies in $\T^{\an}_{\mathfrak{m}}$ for each $\mathfrak{m}\subseteq\mathfrak{a}$.  Further, we prove that $T_2^2$ lies in $\T^{\an}_{\mathfrak{m}}\cdot T_2+\T^{\an}_{\mathfrak{m}}$, so that any $T\in\T$, being an element in $\T^{\an}[T_2]$, lies in $\T^{\an}_{\mathfrak{m}}\cdot T_2+\T^{\an}_{\mathfrak{m}}$ also, and hence is half of an element in $\T^{\an}_{\mathfrak{m}}$.
\subsection{$\overline{\rho}$ ordinary irreducible}
We first assume that the residual representation $G_{\Q}\rightarrow\GL_2(\T^{\an}_{\mathfrak{m}}/\mathfrak{m})$ is irreducible but the local residual representation at $2$ is reducible.  We will show that $2T_2$, as an element of $\T^{\an}_{\mathfrak{m}}[T_2]$, actually lies in $\T^{\an}_{\mathfrak{m}}$.  This will be done by proving it is in the ring generated over $\Z_2$ by the traces of $\rho$.  Equivalently, we will look at the traces of $\rho\otimes_{\Z_2}\Q_2$.  This breaks the representation into a direct sum $\bigoplus_i\rho'_i\tensor\Q_2: G_{\Q}\rightarrow\prod_i\GL_2(E_i)$.  Each of the $\rho'_i$ themselves have the same residual representation which is reducible when restricted to the decomposition group, so all these representations are ordinary.

Looking at a given $\rho'_i$, we may apply Theorem \ref{wiles} to it to obtain a shape of $\rho'_i|_{D_2}$.  In particular, the trace of an element $\rho(g)$ is equal to $\chi(g)\lambda^{-1}(g)+\lambda(g)$ with $\lambda$ the unramified character whose image of Frobenius is the unit root of $X^2-T_2X+2$, and $\chi$ is the cyclotomic character.  If $\alpha$ denotes the unit root of $x^2-a_{2, i}x+2=0$, then letting $g$ be an element of $\Gal(\Q_2^{\text{ab}}/\Q_2)$ which both is a lift of Frobenius and acts trivially on the $2$-power roots of unity (so $\chi(g)=1$), then we know $\Tr(g)=\alpha+\alpha^{-1}$.  If we let $h$ be a lift of Frobenius with $\chi(h)=-1$, we find that $\Tr(h)=\alpha-\alpha^{-1}$.  And by definition, we know $\alpha+\frac{2}{\alpha}=a_{2,i}$, so $2a_{2, i}=2\alpha+4\alpha^{-1}=3\Tr(g)-\Tr(h)$.

We now look at the product of representations.  The elements $g$ and $h$ were independent of the coefficient field, so we know that the element of $\T^{\an}_{\mathfrak{m}}\tensor\Q_2$ that is $2a_{2, i}$ in each coordinate, namely $2T_2\tensor1$, is equal to $3\Tr(g)-\Tr(h)$.  So $2T_2$ is in the ring generated by the traces of elements, and thus in $\T^{\an}_{\mathfrak{m}}$.

Similarly, we can prove that $T_2^2$ is in $\T^{\an}_{\mathfrak{m}}+T_2\cdot \T^{\an}_{\mathfrak{m}}$: in each coordinate, we can calculate that \[a_{2, i}^2=\Tr(g)a_{2, i}+(\Tr(gh)-\Tr(g^2)-1).\]  So in $\T^{\an}_{\mathfrak{m}}[T_2]$, we find that $T_2^2=\Tr(g)T_2+(\Tr(gh)-\Tr(g^2)-1)$.  So $T_2^2\subseteq\T^{\an}_{\mathfrak{m}}+T_2\cdot \T^{\an}_{\mathfrak{m}}$, and therefore so is every power of $T_2$.  So we know that $2\T^{\an}_{\mathfrak{m}}[T_2]\subseteq\T^{\an}_{\mathfrak{m}}$, and the $\T^{\an}_{\mathfrak{m}}$-module quotient $\T^{\an}_{\mathfrak{m}}[T_2]/\T^{\an}_{\mathfrak{m}}$ is an $\F_2$ vector space.  In section \ref{dimcount} we will calculate its dimension.

\subsection{$\overline{\rho}$ reducible}\label{reducible}
We now suppose $\T^{\an}_{\mathfrak{m}}$ corresponds to a reducible residual representation, so that $\mathfrak{m}$ is the Eisenstein ideal generated by $2$ and $T_{\ell}$ for $\ell\nmid N$ (including $\ell=2$).  We claim that $T_2$ is already in $\T^{\an}_{\mathfrak{m}}$.  This is because by \cite[Proposition 17.1]{MR488287}, the Eisenstein ideal of the full Hecke algebra is generated by $1+\ell-T_{\ell}$ for any good prime.  So by completeness, $T_2-3$ and therefore $T_2$ can be written as a power series in $T_\ell-\ell-1$.

\subsection{$\overline{\rho}$ non-ordinary}\label{supersingular}
We now assume that the residual local representation at $2$ is irreducible, or equivalently that in $\T_{\mathfrak{a}}$, $T_2$ is not a unit, where $\mathfrak{a}$ is some ideal of $\T$ above $\mathfrak{m}$ corresponding to $\rho$.  We claim that $T_2$ is already in $\T^{\an}_{\mathfrak{m}}$, so that $\mathfrak{a}=\mathfrak{m}$ is actually unique, and the index is $1$.

\begin{thm}\label{nonord}If $\rho$ is non-ordinary with corresponding map $\T^{\an}\rightarrow\F$ with maximal ideal $\mathfrak{m}$, then for any $\mathfrak{a}\subseteq\T$ containing $\mathfrak{m}$, $T_2\in\T_{\mathfrak{a}}$ is already contained in the image of $\T^{\an}_{\mathfrak{m}}$.\end{thm}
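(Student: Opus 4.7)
The plan is to adapt the ordinary-case strategy, replacing its use of Theorem~\ref{wiles} (which gave the reducible shape of $\rho|_{D_2}$ in the ordinary setting) with an analysis exploiting the residual irreducibility of $\overline{\rho}|_{D_2}$. First I would apply Lemma~\ref{carayollike} to obtain a representation $\rho : G_{\Q} \to \GL_2(\T^{\an}_{\mathfrak{m}})$; this is legitimate because residual irreducibility of $\overline{\rho}|_{D_2}$ forces $\overline{\rho}$ to be globally irreducible, so $\mathfrak{m}$ is non-Eisenstein. The problem then reduces to exhibiting $T_2$ as a $\Z_2$-linear combination of traces of $\rho$, a strictly stronger conclusion than in the ordinary case, which only produced $2T_2$.

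Next I would tensor with $\Q_2$, decomposing $\rho \otimes \Q_2 = \bigoplus_i \rho_i$ into newform components $\rho_i : G_{\Q} \to \GL_2(E_i)$, and observe that the residual vanishing of $T_2$ modulo $\mathfrak{m}$ forces $v_2(a_{2,i}) > 0$ for every $i$. Each local piece $\rho_i|_{D_2}$ is crystalline of Hodge--Tate weights $(0,1)$, with crystalline Frobenius having characteristic polynomial $X^2 - a_{2,i}X + 2$, but now this local representation is absolutely irreducible rather than reducible.

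The central step is to extract $a_{2,i}$, rather than $2 a_{2,i}$, as a Galois trace. In the ordinary case the trace of the chosen Frobenius lift $g$ gave $\alpha + \alpha^{-1}$ instead of the Hecke eigenvalue $a_{2,i} = \alpha + 2\alpha^{-1}$, forcing the introduction of a second element $h$ and a $\Z_2$-combination that unavoidably produced a factor of $2$. In the non-ordinary case no analogous character-splitting is available, so I would instead use the rigidity of the irreducible local deformation ring at $2$: the residual irreducibility of $\overline{\rho}_i|_{D_2}$, together with the known characteristic polynomial of crystalline Frobenius, should force $a_{2,i} = \Tr(\rho_i(\sigma))$ for a carefully chosen $\sigma \in G_{\Q_2}$ built from the level-$2$ fundamental characters governing $\overline{\rho}_i|_{I_2}$.

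The main obstacle is precisely the construction of $\sigma$ and the verification of the trace identity: unlike the ordinary case where the explicit upper-triangular shape of $\rho|_{D_2}$ made the calculation direct, here one must argue via local deformation theory or integral $p$-adic Hodge theory that the traces of available Galois elements span enough of the local representation to recover $a_{2,i}$ exactly, and that this recovery is uniform across all $i$ so it descends to an identity in $\T^{\an}_{\mathfrak{m}}$ rather than only in its fraction field. Once this is done, $T_2 \in \T^{\an}_{\mathfrak{m}}$ follows, which forces $\T_{\mathfrak{a}} = \T^{\an}_{\mathfrak{m}}[T_2] = \T^{\an}_{\mathfrak{m}}$ and hence the uniqueness of $\mathfrak{a}$ above $\mathfrak{m}$ asserted in the statement.
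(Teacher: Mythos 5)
There is a genuine gap, and it sits exactly where you locate it yourself: the ``central step'' of producing $\sigma\in G_{\Q_2}$ (or a $\Z_2$-combination of traces) with $\Tr(\rho_i(\sigma))=a_{2,i}$ uniformly in $i$ is not an argument but a hope, and there is no evident way to carry it out. In the non-ordinary case $a_{2,i}$ is the trace of \emph{crystalline} Frobenius on $D_{\mathrm{cris}}$, which is not an element of $G_{\Q_2}$; unlike the ordinary case there is no splitting of $\rho_i|_{D_2}$ into characters that lets you evaluate traces of explicit Galois elements, so appealing to ``rigidity of the local deformation ring'' or integral $p$-adic Hodge theory does not by itself produce a trace identity, let alone one given by a single universal expression valid simultaneously in every component $\mathcal{O}_i$ (the $a_{2,i}$ can even have different valuations). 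Note also that in the ordinary case, where the local shape is completely explicit, this method could only reach $2a_{2,i}$; it is not plausible that the ``more irreducible'' non-ordinary local representation makes trace extraction easier. So the proposal reduces the theorem to an unproved claim that carries all of its content.

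The paper's actual mechanism is entirely different and global/mod~$2$ in nature. By Nakayama it suffices to show $\T/\mathfrak{m}\T$ is one-dimensional over $\T^{\an}/\mathfrak{m}$. If not, the perfect pairing of Lemma~\ref{duality} produces a nonzero form $g\in S_2(\Gamma_0(N),\F_2)[\mathfrak{m}]$ all of whose odd-power coefficients vanish; Katz's theorem (\cite{MR0463169}) then writes $g=f^2=Vf$ for a weight~$1$ Katz eigenform $f\in S_1(\Gamma_0(N),\F_2)^{\Katz}[\mathfrak{m}]$ with $T_2$-eigenvalue $b_2$, and the doubled space $\langle Vf, Af\rangle$ carries the $T_2$-action with matrix $\begin{pmatrix}b_2&-\langle2\rangle\\ 1&0\end{pmatrix}$. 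Since $\langle2\rangle=1$ mod~$2$, this matrix is invertible, so $T_2$ acts invertibly on a space killed by $\mathfrak{m}$ --- contradicting non-ordinarity ($T_2\in\mathfrak{a}$). This is where the non-ordinary hypothesis is used, and none of it appears in your proposal; to salvage your write-up you would either have to supply the missing local trace identity (which I do not believe is available) or switch to this duality-plus-Katz argument.
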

\begin{proof}The $\T^{\an}_{\mathfrak{m}}$-module $\T^{\an}_{\mathfrak{m}}[T_2]$ requires the same generators as the $\T^{\an}/\mathfrak{m}$-vector space $\T/\mathfrak{m}\T$ by Nakayama's Lemma, so it's enough to prove that $\T/\mathfrak{m}\T$ is one-dimensional over $\T^{\an}/\mathfrak{m}$.  If it's not, then all of $\T^{\an}/\mathfrak{m}$ and $T_2$ are independent over $\F_2$, so there is a homomorphism $\phi\in\Hom(\T/\mathfrak{m}\T,\F_2)$ sending all of $\T^{\an}/\mathfrak{m}$ to $0$, and $T_2$ to $1$.  Recalling the perfect pairing after Lemma \ref{duality}, we find a nonzero modular form $g\in S_2(\Gamma_0(N),\F_2)[\mathfrak{m}]$ with all odd coefficients equal to $0$.

By part (3) of the main result of \cite{MR0463169}, we know that there is some nonzero form $f\in S_1(\Gamma_0(N), \F_2)^{\Katz}$ with $f^2=g$.  (Here, we're considering weight $1$ Katz forms, and so the weight $2$ forms we construct may be Katz forms as well.  So if necessary we enlarge the spaces we're considering, but it doesn't affect the conclusion.)  As forms with coefficients in $\F_2$ commute with the Frobenius endomorphism, $f(q^2)$ has the same $q$-expansion as $g$.  If $\T^1$ and $\T^{1,\an}$ are the weight $1$ Hecke algebras, it is quick to check that the corresponding Hecke actions on $q$-expansions of $\T^{1,\an}$ are identical to those of $\T^{\an}$.  Therefore $f\in S_1(\Gamma_0(N), \F_2)^{\Katz}[\mathfrak{m}]$.  Further, we know that $f$ is alone in this space, by part (2) of \cite{MR0463169}: any other form in $S_1(\Gamma_0(N), \F_2)^{\Katz}[\mathfrak{m}]$ has the same odd coefficients, so the difference between it and $f$ has only even-power coefficients, and hence must be $0$ by Katz's theorem.  So $f$ is also an eigenform for $T_2$ in weight $1$, say with eigenvalue $b_2$.

So we've discovered that $S_2(\Gamma_0(N), \F_2)^{\Katz}[\mathfrak{m}]$ is at most $2$ dimensional, spanned by $Vf$ and $Af$.  Here, $V$ acts as $V\left(\sum_{n=1}^{\infty}a_nq^n\right)=\sum_{n=1}^{\infty}a_nq^{2n}$ on power series, so that $Vf=g$, and can either be a weight-doubling operator, as used in \cite{MR0463169}, or a level-doubling operator.  Additionally, $Af$ is the multiplication of $f$ with the Hasse invariant $A$, which preserves $q$-expansions.  We can hence calculate the action of $T_2$ on this space: we know that $T_2$ acts in weight $2$ via $U+2V$, where $U\left(\sum_{n=1}^{\infty}a_nq^n\right)=\sum_{n=1}^{\infty}a_{2n}q^n$, and in weight $1$ as $U+\langle2\rangle V$ with $\langle2\rangle$ the diamond operator, which is identically $1$ on mod 2 forms.  Further, we can compute that $UVf=Af$, as $V$ doubles each exponent and $U$ halves it.

So we find\begin{align*}T_2(Vf)&=UVf=Af\\ T_2(Af)&=U(Af)=A(Uf)=A(T_2f-\langle2\rangle Vf)=A(b_2f)-\langle2\rangle Vf\end{align*}and the matrix for the $T_2$ action is $\begin{pmatrix}b_2&-\langle2\rangle\\ 1&0\end{pmatrix}$.  (In these computations, the distinction between the level-raising $V$ and the weight-raising $V$ has been blurred, because on $q$-expansions they are equal; we view both lines as equalities of weight $2$ level $\Gamma_0(N)$ forms.)  As $\langle2\rangle$ is trivial, the determinant of this matrix is $1$, so $T_2$ is invertible.  This is impossible because the form was non-ordinary.  So there cannot be such a form $g$, and $\T^{\an}_{\mathfrak{m}}[T_2]$ requires only one generator as a $\T^{\an}_{\mathfrak{m}}$-module, as required.
\end{proof}
\section{Dimension of $\T/\T^{\an}$}\label{dimcount}
In this section we prove the second half of Theorem \ref{mainthm}.  It is enough to look locally, so we will localize at a maximal ideal $\mathfrak{m}$ of $\T^{\an}$.  Because completion at only ordinary non-Eisenstein ideals have $T_2$ not immediately in $\T^{\an}_{\mathfrak{m}}$, we assume that $\mathfrak{m}$ is such an ideal.

\subsection{Relating $\T/\T^{\an}$ to $S_2$}

We first recall the perfect pairing $S_2(\Gamma_0(N), \F_2)\times \T/2\rightarrow \F_2$, given by $(f, T)\rightarrow a_1(Tf)$.  While proving this, we proved perfect pairings $S_2(\Gamma_0(N), \F_2)_{\mathfrak{a}}\times\T_{\mathfrak{a}}/2\rightarrow\F_2$, and we now combine all $\mathfrak{a}$ that contain $\mathfrak{m}$, to get a perfect pairing $S_2(\Gamma_0(N), \F_2)_{\mathfrak{m}}\times\T_{\mathfrak{m}}/2\rightarrow\F_2$ where we denote $\T_{\mathfrak{m}}$ as the localization of $\T$ at the (not necessarily maximal) ideal $\mathfrak{m}\T$, and $S_2(\Gamma_0(N), \F_2)_{\mathfrak{m}}=e\cdot S_2(\Gamma_0(N), \F_2)$ for $e$ the projection from $\T$ to $\T_{\mathfrak{m}}$.  Considering the subspace of forms killed by $A\theta$, the operator defined in \cite{MR0463169} which acts as $q\frac{d}{dq}$ on $q$-expansions and raises the weight by $3$, it's clear that the entirety of $\T^{\an}_{\mathfrak{m}}$ annihilates it under the pairing, and we wish to prove that this is the full annihilator.  For ease of notation, let us write $V=\T_{\mathfrak{m}}/2\T_{\mathfrak{m}}$, $W=S_2(\Gamma_0(N), \F_2)_{\mathfrak{m}}$, and $V'=\T_{\mathfrak{m}}^{\an}/2\T_{\mathfrak{m}}$.

\begin{lem}\label{annihilators}$S_2(\Gamma_0(N), \F_2)_{\mathfrak{m}}\cap\Ker A\theta$ and $\T_{\mathfrak{m}}^{\an}/2\T_{\mathfrak{m}}$ are mutual annihilators in this perfect pairing.\end{lem}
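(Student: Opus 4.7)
The plan is to identify $\Ann_W(V')$ with $W\cap\Ker A\theta$ directly, and then invoke the double-annihilator property of a perfect pairing to conclude $\Ann_V(W\cap\Ker A\theta)=V'$, which yields the mutual annihilation claim.

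The key input is the standard $q$-expansion computation in weight $2$: for $(n,N)=1$ one has $a_1(T_n g)=a_n(g)$, and $a_1(U_N^k g)=a_{N^k}(g)$, so combining gives $a_1(U_N^k T_n g)=a_{nN^k}(g)$ for every $k\geq 0$ and every $n$ with $(n,N)=1$. Since $U_N\in\T^{\an}$ by Theorem \ref{UNgood}, and since the Hecke relation $T_n T_{n'}=\sum_{d\mid(n,n')}d\cdot T_{nn'/d^2}$ keeps the index set ``$n$ odd with $(n,N)=1$'' closed, the operators $\{U_N^k T_n:k\geq 0,\ n\text{ odd},\ (n,N)=1\}$ additively span $\T^{\an}$ over $\Z$, and hence their images span $V'$ over $\F_2$. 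Also, $A\theta$ acts by $\sum a_n q^n\mapsto\sum n a_n q^n$, so in characteristic $2$ the kernel of $A\theta$ on $W$ consists precisely of those $f$ with $a_m(f)=0$ for every odd $m$.

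Both inclusions are then immediate. If $f\in W\cap\Ker A\theta$ and $T=U_N^k T_n$ for $n$ odd coprime to $N$, the pairing $a_1(Tf)=a_{nN^k}(f)=0$ since $nN^k$ is odd; since such operators span $V'$, this gives $V'\subseteq\Ann_V(W\cap\Ker A\theta)$. Conversely, if $f\in W$ annihilates $V'$, then writing an arbitrary odd $m$ as $m=nN^k$ with $n$ odd coprime to $N$ (possible since $N$ is an odd prime), pairing $f$ against $U_N^k T_n$ forces $a_m(f)=0$; hence $A\theta f=0$. So $\Ann_W(V')=W\cap\Ker A\theta$, and by perfectness of the pairing we deduce $\Ann_V(W\cap\Ker A\theta)=V'$. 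The only real obstacle is bookkeeping: verifying that the Hecke relations really do reduce every polynomial in the $T_n$ to a $\Z$-linear combination of operators of the form $U_N^k T_n$ with $n$ odd, so that no element of $V'$ escapes the pairing calculation.
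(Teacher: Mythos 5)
Your proposal is correct and takes essentially the same route as the paper: the core of both arguments is the $q$-expansion computation $a_1(U_N^kT_nf)=a_{nN^k}(f)$ (with $U_N\in\T^{\an}$ by Theorem \ref{UNgood}), which identifies the annihilator of $V'$ in $W$ with $W\cap\Ker A\theta$. For the reverse inclusion, the paper itself remarks that this ``is enough to show they are mutual annihilators by dimension count'' before giving an optional explicit construction, so your appeal to the double-annihilator property of the perfect pairing of finite-dimensional $\F_2$-spaces (legitimate because $2\T_{\mathfrak{m}}\subseteq\T^{\an}_{\mathfrak{m}}$ by Lemma \ref{doubleT}, so $V'$ is an $\F_2$-subspace of $V$) is precisely the shortcut the paper endorses.
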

\begin{proof}We've seen that they annihilate each other.  Now suppose $f=\sum_{i=1}^{\infty}a_iq^i\in W$ is annihilated by all of $V'$.  By the usual formula for the Hecke action on $q$-expansions, the coefficient of $q^1$ in $T_nf$ is $a_n$, so $a_n=0$ for all odd $n$.  Therefore $f\in S_2(\Gamma_0(N), \F_2)_{\mathfrak{m}}\cap\Ker A\theta$, and we can call this space $\Ann(V')$.  This is enough to show they are mutual annihilators by dimension count, but we'll prove the other direction as well.

The space $W/\Ann(V')$ is represented by sequences of odd-power coefficients that appear in forms in $W$.  We first prove that the map $V'\rightarrow\Hom(W/\Ann(V'),\F_2)$ induced by the pairing is surjective.  Given a map $\varphi\in\Hom(W/\Ann(V'),\F_2)$ whose input is sequences of odd-power coefficients, we can define a map $\varphi'$ in the double dual of $V'$ taking maps \[\chi: V'\rightarrow \F_2\text{ to }\varphi(\chi(T_1), \chi(T_3), \chi(T_5), \ldots).\]  This is the definition of $\varphi'$ when $(\chi(T_1), \chi(T_3), \ldots)$ appears as the odd-power coefficients of a form.  And then if we've not defined $\varphi'$ on all of the dual of $V'$, we can just extend it any way we want.  But because $V'$ is finite dimensional, this $\varphi'$ determines an element $T_{\varphi}\in V'$ for which \[\chi(T_{\varphi})=\varphi'(\chi)=\varphi(\chi(T_1), \chi(T_3), \ldots).\]  Then because any sequence of coefficients $(a_1, a_3, \ldots)$ is given by a character $\chi_{(a_i)}: T_n\rightarrow a_n$ (the restriction of such a $\chi$ from $\T_{\mathfrak{a}}$, for example), the pairing truly does send $T_{\varphi}$ to $\varphi$.

Now given $T$ that sends all of $\Ann(V')$ to $0$, $Tf$ must only depend on the odd coefficients of $f$.  But then $\varphi: f\rightarrow a_1(Tf)$ is an element of $\Hom(W/\Ann(V'),\F_2)$.  So by surjectivity there is some element $T'$ of $V'$ with $a_1(Tf)=\varphi(f)=a_1(T'f)$ for all $f\in W/\Ann(V')$.  Then $a_1((T-T')f)$ is $0$ for all $f$ either in $\Ann(V')$ or a lift of an element of $W/\Ann(V')$, and so in all of $W$.  Because the pairing is perfect, $T=T'\in V'$ as we needed.\end{proof}

Now that we know these are mutual annihilators, we obtain an isomorphism \[V/V'\rightarrow\Hom(\Ann(V'),\F_2),\] and taking dimensions and reinterpreting, we've proven that \[\dim\T_{\mathfrak{m}}/\T^{\an}_{\mathfrak{m}}=\dim S_2(\Gamma_0(N), \F_2)_{\mathfrak{m}}\cap\Ker A\theta.\]So we have proven the following.

\begin{lem}\label{prelimind}The index of $\T^{\an}_{\mathfrak{m}}$ in $\T_{\mathfrak{m}}$ equals $2$ raised to the dimension of $S_2(\Gamma_0(N), \F_2)_{\mathfrak{m}}\cap\Ker A\theta.$\end{lem}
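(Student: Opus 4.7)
The plan is to package the result as a straightforward corollary of Lemma~\ref{annihilators} combined with Lemma~\ref{doubleT}. The key translations are from the $\F_2$-quotient $V/V'$ to the integral quotient $\T_{\mathfrak{m}}/\T^{\an}_{\mathfrak{m}}$, and from the dual of $\Ann(V')$ to $\Ann(V')$ itself.

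First I would use Lemma~\ref{doubleT}, which tells us $2\T_{\mathfrak{m}} \subseteq \T^{\an}_{\mathfrak{m}}$, to deduce that $\T_{\mathfrak{m}}/\T^{\an}_{\mathfrak{m}}$ is an $\F_2$-vector space and that the natural surjection $\T_{\mathfrak{m}}/2\T_{\mathfrak{m}} \twoheadrightarrow \T_{\mathfrak{m}}/\T^{\an}_{\mathfrak{m}}$ has kernel precisely $\T^{\an}_{\mathfrak{m}}/2\T_{\mathfrak{m}}$. In our earlier notation this reads $\T_{\mathfrak{m}}/\T^{\an}_{\mathfrak{m}} = V/V'$. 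Hence the index $[\T_{\mathfrak{m}}:\T^{\an}_{\mathfrak{m}}]$ is exactly $2^{\dim_{\F_2} V/V'}$, and our task reduces to computing $\dim_{\F_2} V/V'$.

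Next I would invoke Lemma~\ref{annihilators}: the perfect pairing $W \times V \to \F_2$ makes $V'$ and $\Ann(V') = S_2(\Gamma_0(N),\F_2)_{\mathfrak{m}} \cap \Ker A\theta$ mutual annihilators. Passing to orthogonal complements in a perfect pairing of finite-dimensional $\F_2$-vector spaces yields an isomorphism
\[
V/V' \;\xrightarrow{\sim}\; \Hom_{\F_2}(\Ann(V'), \F_2),
\]
so $\dim_{\F_2} V/V' = \dim_{\F_2} \Ann(V') = \dim_{\F_2}\bigl(S_2(\Gamma_0(N),\F_2)_{\mathfrak{m}} \cap \Ker A\theta\bigr)$. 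Combining with the reduction above gives the claimed index formula.

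There is no real obstacle here; the content is already in Lemmas~\ref{doubleT} and \ref{annihilators}. The only thing to be careful about is the identification $V/V' = \T_{\mathfrak{m}}/\T^{\an}_{\mathfrak{m}}$, which relies crucially on $2\T_{\mathfrak{m}} \subseteq \T^{\an}_{\mathfrak{m}}$ so that the mod-$2$ quotient carries all of the index information. After that, the rest is a formal perfect-pairing dimension count.
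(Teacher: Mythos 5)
Your proposal is correct and follows essentially the same route as the paper: both deduce from Lemma~\ref{doubleT} that $\T_{\mathfrak{m}}/\T^{\an}_{\mathfrak{m}}$ is an $\F_2$-vector space identified with $V/V'$, and then use the mutual-annihilator statement of Lemma~\ref{annihilators} to get $V/V'\simeq\Hom(\Ann(V'),\F_2)$ and conclude by a dimension count. Your write-up merely makes explicit the identification $V/V'=\T_{\mathfrak{m}}/\T^{\an}_{\mathfrak{m}}$ that the paper leaves as "reinterpreting."
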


\subsection{Lifting from weight $1$ to weight $2$}
Now we use the main theorem of \cite{MR0463169} to find a subspace of $S_1(\Gamma_0(N), \F_2)^{\Katz}$ that maps under $V$ to $S_2(\Gamma_0(N), \F_2)_{\mathfrak{m}}\cap\Ker A\theta$.  As in Section \ref{supersingular}, we have $\T^{\an}$-equivariance, and so the maximal ideal $\mathfrak{m}$ has an exact analogue in $\T^{1,\an}$ and we land in the subspace $S_1(\Gamma_0(N), \F_2)_{\mathfrak{m}}^{\Katz}$.  We may not obtain the whole subspace because, while $Vf$ is in the kernel of $A\theta$ for all $f\in S_1(\Gamma_0(N), \F_2)_{\mathfrak{m}}^{\Katz}$, we don't know that it's a form that is the reduction of a $\Z_2$ form, which is what $\T^{\an}_{\mathfrak{m}}$ parametrizes.  In this section we will prove that the space of Katz forms of weight $2$ actually are all standard forms.

The first case is $N\equiv 3\mod 4$, which was taken care of Edixhoven:
\begin{thm}[{\cite[Theorem 5.6]{MR2195943}}]\label{edixhoven} Let $N\geq 5$ be odd and divisible by a prime number $q\equiv-1$ modulo $4$ (hence the stabilizers of the group $\Gamma_0(N)/\{1,-1\}$ acting on the upper half plane have odd order).  Then $S_2(\Gamma_0(N), \F_2)^{\Katz}$ and $\F_2\tensor S_2(\Gamma_0(N), \Z)$ are equal, and the localizations at non-Eisenstein maximal ideals of the algebras of endomorphisms of $S_2(\Gamma_0(N), \F_2)^{\Katz}$ and $H^1_{\textup{par}}(\Gamma_0(N),\F_2)$ generated by all $T_n$ ($n\geq 1$) coincide: both are equal to that of $S_2(\Gamma_0(N), \Z)$ tensored with $\F_2$.\end{thm}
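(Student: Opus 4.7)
The plan is to deduce both assertions from Katz's geometric theory of modular forms, using the stabilizer hypothesis as the single input that controls the $2$-adic behavior of the moduli problem. The condition that $N \geq 5$ is odd and has a prime divisor $q \equiv -1 \pmod 4$ is engineered precisely so that every stabilizer of $\Gamma_0(N)/\{\pm 1\}$ on the upper half plane has odd order: a prime $q \equiv -1 \pmod 4$ dividing $N$ obstructs order-$4$ elliptic points (such a point would force an embedding $\Z[i] \hookrightarrow \mathcal{O}$ with conductor prime to $N$, incompatible with $\left(\tfrac{-1}{q}\right) = -1$), while $N \geq 5$ handles the order-$2$ case and $N$ odd handles the order-$3$ case. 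In particular all stabilizer orders are units in $\Z_2$.

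To establish $S_2(\Gamma_0(N), \F_2)^{\Katz} = \F_2 \tensor S_2(\Gamma_0(N), \Z)$, I would identify Katz forms over a ring $R$ with $H^0(\overline{\mathcal{M}}_0(N)_R, \omega^{\tensor 2}(-\text{cusps}))$ on the compactified moduli stack. Because all automorphism orders are invertible in $\Z_2$, the stack $\overline{\mathcal{M}}_0(N)_{\Z_2}$ is equivalent to its coarse moduli scheme $X_0(N)_{\Z_2}$, which is smooth and proper over $\Z_2$, and $\omega^{\tensor 2}(-\text{cusps})$ descends to a genuine line bundle on it. On such a curve, $H^0$ of a line bundle commutes with base change from $\Z_2$ to $\F_2$ provided the $H^1$ is flat, and here Serre duality identifies $H^1(X_0(N)_{\Z_2}, \omega^{\tensor 2}(-\text{cusps}))$ with $H^0$ of a line bundle of negative degree (once $N$ is large enough), which vanishes. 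Flat base change then gives the desired identification.

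For the Hecke algebra comparison, I would relate both Hecke modules to $H^1_{\text{par}}(\Gamma_0(N), \Z) \tensor \F_2$. The integral Eichler-Shimura isomorphism, combined with the de Rham comparison on $X_0(N)_{\Z_2}$, produces a Hecke-equivariant Hodge filtration on $H^1_{\text{par}}(\Gamma_0(N), \F_2)$ whose graded pieces recover $S_2(\Gamma_0(N), \F_2)^{\Katz}$ and (a twist of) its dual. The odd-stabilizer hypothesis is once again what guarantees that this filtration exists integrally at $2$. After localizing at a non-Eisenstein maximal ideal $\mathfrak{m}$, Mazur-style multiplicity-one (equivalently, Gorenstein-ness of the localized Hecke algebra in the style of \cite{MR488287}) forces $(H^1_{\text{par}})_{\mathfrak{m}}$ to be free of rank $2$ over the localized Hecke algebra, and the faithfulness of the Hecke action on each graded piece then identifies the Hecke algebras on $S_2(\Gamma_0(N), \F_2)^{\Katz}$, on $H^1_{\text{par}}(\Gamma_0(N), \F_2)$, and on $S_2(\Gamma_0(N), \Z) \tensor \F_2$.

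The main obstacle is the base-change step for Katz forms: without the stabilizer hypothesis, the $\Gamma_0(N)$ moduli problem is only a coarse moduli space at $2$, and the stacky structure can contribute genuine $2$-torsion that creates a gap between Katz forms and reductions of classical integral forms. Pinning down the minimal combinatorial condition on $N$ that trivializes the relevant gerbe at $2$ — precisely the odd-stabilizer hypothesis — is the technical heart of the argument, and it also propagates into the multiplicity-one input needed for the Hecke algebra comparison.
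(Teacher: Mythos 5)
This statement is not proved in the paper at all: it is quoted verbatim as \cite[Theorem 5.6]{MR2195943} (Edixhoven's comparison of integral structures), and the paper simply uses it as a black box in Section \ref{dimcount}. So the comparison to make is with Edixhoven's argument, and against that your sketch has two genuine problems.

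First, you misread what the hypothesis buys. A prime divisor $q\equiv -1\bmod 4$ of $N$ rules out elliptic points of order $2$ on $X_0(N)$, so the stabilizers in $\Gamma_0(N)/\{\pm1\}$ have \emph{odd} order; it does not make them trivial, and order-$3$ elliptic points are perfectly possible under the hypotheses (e.g.\ $N=7$), since ``$N$ odd'' has nothing to do with excluding them. Consequently the modular stack over $\Z_2$ is \emph{not} equivalent to its coarse moduli scheme, and $\omega^{\tensor 2}(-\text{cusps})$ does not literally descend to a line bundle: at an order-$3$ point the stabilizer acts on $\omega^{\tensor 2}$ through a nontrivial cube root of unity. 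The correct mechanism, which is the one Edixhoven exploits, is tameness at $2$: since the stabilizer orders are invertible in $\Z_2$, taking invariants (equivalently, forming the pushforward to the coarse space) is exact on $\Z_2$-modules, so the formation of the relevant direct image sheaf and of its global sections commutes with reduction mod $2$; combined with a vanishing of $H^1$ this gives $S_2(\Gamma_0(N),\F_2)^{\Katz}=\F_2\tensor S_2(\Gamma_0(N),\Z)$. Your base-change skeleton can be repaired along these lines, but as written the key geometric assertions are false.

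Second, and more seriously, your route to the Hecke-algebra comparison invokes Mazur-style multiplicity one, i.e.\ freeness of rank $2$ of $(H^1_{\textup{par}})_{\mathfrak m}$ (equivalently Gorenstein-ness) at non-Eisenstein maximal ideals in residue characteristic $2$. This is known to fail at $p=2$ for prime levels satisfying the theorem's hypotheses: Kilford exhibited non-Gorenstein localizations at non-Eisenstein maximal ideals for $N=431$ and $N=503$ (both $\equiv 3\bmod 4$). So an argument resting on that input cannot prove the stated theorem in the generality claimed. Edixhoven's actual comparison of the Hecke algebras acting on $S_2(\Gamma_0(N),\F_2)^{\Katz}$, on $H^1_{\textup{par}}(\Gamma_0(N),\F_2)$, and on $S_2(\Gamma_0(N),\Z)\tensor\F_2$ proceeds by comparing the integral lattices themselves (with the discrepancy supported at Eisenstein ideals), and deliberately asserts only equality of the faithful quotient algebras after localization, which is weaker than, and does not require, multiplicity one.
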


So for primes $N\equiv 3\mod 4$, we've proven the equality in Theorem \ref{mainthm}.  For the remainder of this section we therefore assume $N\equiv 1\mod 4$.  Further, up until this point we've only worked with $\F_2$-forms, but we change coefficients to $\overline{\F}_2$ so that we can find eigenforms associated to each maximal ideal.  Theorem \ref{edixhoven} still applies as its proof in \cite{MR2195943} can be extended to all finite extensions of $\F_2$.

\begin{thm}\label{nokatz}There are no Katz forms that are not the reduction of a form in $S_2(\Gamma_0(N),\overline{\Z}_2)$.  That is, \[S_2(\Gamma_0(N),\overline{\F}_2)^{\Katz}=S_2(\Gamma_0(N),\overline{\F}_2).\]\end{thm}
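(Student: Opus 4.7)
The case $N\equiv3\pmod4$ is Theorem~\ref{edixhoven}, so we may assume $N\equiv1\pmod4$. In this case $Y_0(N)$ has exactly two elliptic points of order~$2$ (the preimages of $j=1728$), and these are precisely the loci where Katz forms could a priori produce something not coming from a classical $\Z_2$-integral form in characteristic~$2$, since the moduli stack $\mathcal{X}_0(N)_{\F_2}$ differs from its coarse space only at those two points.

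My plan is to analyze the quotient $Q:=S_2(\Gamma_0(N),\overline{\F}_2)^{\Katz}/S_2(\Gamma_0(N),\overline{\F}_2)$ as a module over $\T^{\an}\otimes\overline{\F}_2$ and show that it vanishes componentwise. At the Eisenstein maximal ideal the weight-$2$ Eisenstein series is already integral, so that component of $Q$ is trivial. For a non-Eisenstein maximal ideal $\mathfrak{m}$, I would attach to any $\overline{\F}_2$-eigenform class $\bar f\in Q_\mathfrak{m}$ a semisimple Galois representation $\bar\rho:G_\Q\to\GL_2(\overline{\F}_2)$ unramified outside $2N$ with $\Tr\bar\rho(\Frob_\ell)=a_\ell(\bar f)$ for $\ell\nmid 2N$, using the construction of Galois representations attached to mod-$2$ Katz eigenforms, and then invoke a modularity/level-conservation argument to produce a classical newform $h\in S_2(\Gamma_0(N),\overline{\Z}_2)$ with the same Hecke eigenvalues away from $2N$. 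The Katz $q$-expansion principle would then identify $\bar f$ with the reduction of $h$, contradicting $\bar f\neq 0$ in $Q_\mathfrak{m}$.

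The main obstacle is this last modularity step, since the modularity machinery is delicate at $p=2$: one must ensure that the mod-$2$ representation is modular at level exactly $N$ and weight $2$, rather than at a higher level or a twisted weight. A more hands-on alternative, which I would pursue if the modularity route proves awkward, is to compute $\dim_{\overline{\F}_2}H^0(\mathcal{X}_0(N)_{\F_2},\omega^{\otimes 2})$ directly by Riemann--Roch on the stack---the correction relative to the coarse space being supported only at the two stacky points above $j=0$ in characteristic~$2$---and compare with $g(X_0(N))=\dim_{\overline{\F}_2}S_2(\Gamma_0(N),\overline{\F}_2)$, giving the vanishing of $Q$ by a dimension count. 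This second route is more elementary but its technical heart is a careful Riemann--Roch computation in the presence of stacky points whose order coincides with the residual characteristic.
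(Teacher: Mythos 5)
Both of your routes leave the real difficulty untouched, and the final step of Route 1 is actually incorrect as stated. Producing a classical newform $h\in S_2(\Gamma_0(N),\overline{\Z}_2)$ whose eigenvalues away from $2N$ match those of $\bar f$ does \emph{not} let the $q$-expansion principle identify $\bar f$ with the reduction of $h$: the $q$-expansion principle needs all coefficients, and in precisely the situation this paper studies there are distinct Katz forms sharing one prime-to-$2$ eigensystem (for a weight-$1$ Katz eigenform $f$, both $Af$ and $Vf$ are weight-$2$ Katz forms with the same $T_n$-eigenvalues for odd $n$ but different $q$-expansions). The content of Theorem \ref{nokatz} is a multiplicity statement --- that the generalized eigenspace in the Katz space is no bigger than in the classical space --- and the mere existence of a classical form with the same away-from-$2N$ eigensystem cannot show the class of $\bar f$ in your quotient $Q_{\mathfrak m}$ vanishes. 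Moreover the ``modularity/level-conservation argument'' you invoke to get $h$ is itself the hard point at $p=2$ and is left as a black box. Route 2 has a parallel problem: in characteristic $2$ the stacky locus of $\mathcal{X}_0(N)_{\F_2}$ sits over $j=0=1728$, i.e.\ at supersingular points whose automorphism groups have order divisible by $2$, the residue characteristic; this is exactly the wild case in which tame stacky Riemann--Roch fails, and it is the reason Theorem \ref{edixhoven} requires odd stabilizers (a prime divisor $\equiv 3 \bmod 4$). The fact that the paper's proof needs genuine arithmetic input beyond geometry suggests the dimension count cannot be carried out by a ``careful Riemann--Roch computation'' alone.

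For comparison, the paper's argument chooses an arbitrary auxiliary prime $\ell\equiv3\pmod4$ and embeds $S_2(\Gamma_0(N),\overline{\F}_2)^{\Katz}$ into $S_2(\Gamma_0(N\ell),\overline{\F}_2)$, where Theorem \ref{edixhoven} does apply, so every Katz form lifts to characteristic $0$ at level $N\ell$. Writing $T_\ell=U_\ell+\ell V_\ell$, a generalized eigenform $f$ with $T_\ell$-eigenvalue $a_\ell$ has a lift killed topologically nilpotently by $U_\ell^2-a_\ell U_\ell+\ell$; since $U_\ell$ acts by $\pm1$ on level-$N\ell$ newforms (local structure of the $2$-adic representation at $\ell$), either the lift is a combination of oldforms --- so $f$ already lifts at level $N$ --- or $a_\ell\equiv0$. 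Running over all $\ell\equiv3\pmod4$, a non-liftable eigenform would have $\Tr\overline{\rho}(\Frob_\ell)=0$ for all such $\ell$, forcing $\overline{\rho}$ to be induced from $G_{\Q(i)}$, which \cite[Theorem 12(1)]{Kedlaya2019} rules out at level $\Gamma_0(N)$ with $N$ prime. If you want to salvage your approach, you would need to replace the existence-of-$h$ step by an argument controlling the full eigenspace (including the $T_2$-action and multiplicities), which is essentially what the auxiliary-level argument accomplishes.
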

\begin{proof}Let $\ell$ be an arbitrary prime that is $3\mod 4$, and we will look at $S_2(\Gamma_0(N\ell),\overline{\F}_2)^{\Katz}$.  We can apply Theorem \ref{edixhoven} to it and conclude that this space is exactly the characteristic $0$ forms tensored with $\overline{\F}_2$, so we may drop the Katz superscript.  Further, we know that all Katz forms of level $\Gamma_0(N)$ lie in this space.  So we just need to know there are no extra level $\Gamma_0(N)$ forms within this space.

As $\T^{\Katz}\tensor\overline{\F}_2$ can be broken into a direct sum of $\overline{\F}_2$-vector spaces on which the semi-simple action of each operator is by multiplication by a constant, $S_2(\Gamma_0(N), \overline{\F}_2)^{\Katz}$ can be written as a direct sum of generalized eigenspaces.  If we show every generalized eigenform in $S_2(\Gamma_0(N), \overline{\F}_2)^{\Katz}$ is the reduction of a modular form from $S_2(\Gamma_0(N), \overline{\Z}_2)$, then we're done.  So suppose $f$ is a generalized Katz eigenform for all $T_n$, including $T_2$.  Let the eigenvalue corresponding to $T_{\ell}$ equal $a_{\ell}$; we will prove that if $f\not\in S_2(\Gamma_0(N), \overline{\F}_2)$, then $a_{\ell}=0$.

There are two maps from $S_2(\Gamma_0(N),\overline{\F}_2)^{\Katz}$ to $S_2(\Gamma_0(N\ell),\overline{\F}_2)$: the plain embedding with equality on $q$-expansions, and the map $V_{\ell}$ sending $f(q)$ to $f(q^{\ell})$.  We know $T_{\ell}=U_{\ell}+\ell V_{\ell}$ on $q$-expansions, so we find that \[U_{\ell}(T_{\ell}-a_{\ell})=U_{\ell}(U_{\ell}+\ell V_{\ell}-a_{\ell})=U_{\ell}^2-a_{\ell}U_{\ell}+\ell U_{\ell}V_{\ell}=U_{\ell}^2-a_{\ell}U_{\ell}+\ell\]as operators from $S_2(\Gamma_0(N),\overline{\F}_2)^{\Katz}$ to $S_2(\Gamma_0(N\ell),\overline{\F}_2)$.  Then because $f$ is a generalized eigenform, we find \[0=(U_{\ell}^k(T_{\ell}-a_{\ell})^k)f=U_{\ell}^{k-1}(U_{\ell}^2-a_{\ell}U_{\ell}+\ell)(T_{\ell}-a_{\ell})^{k-1}f=\ldots=(U_{\ell}^2-a_{\ell}U_{\ell}+\ell)^kf.\]If we factor $X^2-a_{\ell}X+\ell$ as $(X-\alpha)(X-\beta)$ for some lift of $a_{\ell}$, we've proven that $(U_{\ell}-\alpha)(U_{\ell}-\beta)$ acts topologically nilpotently on any lift of $f$ (which exists by Theorem \ref{nokatz}).  This will eventually be used to prove that one of $\alpha$ or $\beta$, and hence both, reduce to $1$ mod the maximal ideal of $\overline{\Z}_2$.

\begin{lem}\label{nilpotent}For any characteristic $0$ newform $g$ of level $N\ell$, $U_{\ell}-1$ acts topologically nilpotently.\end{lem}\begin{proof}The eigenform $g$ gives us a representation $\rho:G_{\Q}\rightarrow\GL_2(\overline{\Q}_2)$.  The shape of this representation at the decomposition group at $\ell$ is given by \cite[Theorem 3.1(e)]{MR1605752}, as we recalled in the proof of Theorem \ref{UNgood}, which says that \[\rho|_{D_{\ell}}=\begin{pmatrix}\chi\varepsilon&*\\ 0&\chi\end{pmatrix}\] where $\chi$ is the unramified representation that sends $\Frob_{\ell}$ to the $U_{\ell}$-eigenvalue of $g$, and $\varepsilon$ is the $2$-adic cyclotomic character.  Because the determinant is the $2$-adic cyclotomic character as well, we know that $\chi^2=1$, so the $U_{\ell}$-eigenvalue of $g$ is $\pm 1$.  So $U_{\ell}-1$ is either $0$ or $-2$, which both act nilpotently.\end{proof}

If $\alpha-1$ and $\beta-1$ have valuation $0$, then $(U_{\ell}-\alpha)(U_{\ell}-\beta)$ will not act nilpotently on any linear combination of eigenforms which includes at least one newform, by Lemma \ref{nilpotent}.  As $(U_{\ell}-\alpha)(U_{\ell}-\beta)$ acts nilpotently on a lift of $f$, we know that this lift is a linear combinaton of only oldforms, and hence $f$ lifts to $S_2(\Gamma_0(N),\overline{\Z}_2)$.  Otherwise, one of $\alpha$ and $\beta$, and hence both, are $1$ mod the maximal ideal of $\overline{\Z}_2$, and so $\alpha+\beta\equiv0\equiv a_{\ell}$.

Therefore, we have proven that if $f$ is a generalized eigenform in $S_2(\Gamma_0(N),\overline{\F}_2)^{\Katz}$ that has no lift to characteristic $0$, then $a_{\ell}=0$ for any prime $\ell\equiv3\mod 4$, as our choice of $\ell$ was arbitrary.  Letting $g$ be a true eigenform in the same eigenspace as $f$, we obtain a representation $\overline{\rho}_g:G_{\Q}\rightarrow\GL_2(\overline{\F}_2)$ with $\Tr(\rho_g(\Frob_p))=a_p$.  We showed that $\overline{\rho}_g$ has trace $0$ on all $\Frob_{\ell}$, so it must be the induction of a character from $G_{\Q(i)}$ to $G_{\Q}$.  But such a representation is dihedral in the terminology of \cite{Kedlaya2019}, and \cite[Theorem 12(1)]{Kedlaya2019} proves that it's impossible for a dihedral representation on $G_{\Q(i)}$ to give rise to a form of level $\Gamma_0(N)$.  So there can be no Katz eigenforms of level $\Gamma_0(N)$ that don't lift, and hence no generalized eigenforms and therefore no forms at all.\end{proof}

From this, we conclude that all the forms $V_2f$, where $f$ is a weight $1$ form of level $N$, are classical forms, and so the dimension of the space $S_2(\Gamma_0(N), \F_2)_{\mathfrak{m}}\cap\Ker A\theta$ is exactly the dimension $S_1(\Gamma_0(N), \F_2)^{\Katz}_{\mathfrak{m}}$.  And so from Lemma \ref{prelimind}, taking a direct sum over all $\mathfrak{m}$, we obtain Theorem \ref{mainthm}.
\section{Examples}\label{examples}
In this section we use Theorem \ref{mainthm} to make nontrivial observations about the index of $\T^{\an}$ inside $\T$.
\subsection{$N\equiv3\bmod4$}
\begin{lem}\label{example3mod4}If $N\equiv3\bmod4$ is prime, the anemic Hecke algebra $\T^{\an}$ is equal to the full algebra $\T$ if and only if the class group $\textup{Cl}(\Q(\sqrt{-N}))$ is trivial.\end{lem}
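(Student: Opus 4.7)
By Theorem \ref{mainthm}, $\T^{\an}=\T$ iff $\dim_{\F_2}S_1(\Gamma_0(N),\F_2)^{\Katz}=0$, so I must show this vanishing is equivalent to the triviality of $\Cl(\Q(\sqrt{-N}))$. As in Section \ref{dimcount}, passing to $\overline{\F}_2$ coefficients and decomposing into generalized eigenspaces reduces the problem to counting mod $2$ weight $1$ Katz eigenforms of level $\Gamma_0(N)$. To each such eigenform I attach (via the mod $p$ analogue of Deligne--Serre) an odd continuous semisimple $\rho:G_\Q\to\GL_2(\overline{\F}_2)$. Because the form has weight $1$ the representation is unramified at $2$; because the level is $\Gamma_0(N)$ with $N$ prime it is unramified outside $N$; and because the mod $2$ cyclotomic character is trivial, $\det\rho=1$. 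Conversely, each modular such $\rho$ produces such an eigenform, realized in the dihedral case concretely by a theta series.

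In the dihedral case $\rho=\Ind_K^\Q\chi$ for a quadratic $K/\Q$ unramified outside $N$; since $N\equiv3\bmod4$, the field $\Q(\sqrt{N})$ has discriminant $4N$ and so is ramified at $2$, leaving $K=\Q(\sqrt{-N})$ as the only option. The character $\chi:G_K\to\overline{\F}_2^\times$ takes values in a torsion group of odd order, and local analysis at the unique prime of $K$ above $N$ forces $\chi$ to be unramified, hence a character of $\Cl(K)$. The induction is irreducible iff $\chi\ne\chi^{-1}$; since $|\Cl(K)|$ is odd by genus theory (the discriminant $-N$ has only one ramified prime), this holds iff $\chi$ is nontrivial. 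Thus orbits $\{\chi,\chi^{-1}\}$ with $\chi$ nontrivial biject with the dihedral Katz eigenforms in $S_1(\Gamma_0(N),\overline{\F}_2)^{\Katz}$.

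It remains to rule out reducible and exceptional images. A reducible $\rho$ would split as $\chi\oplus\chi^{-1}$ for a character of $G_\Q$ unramified outside $N$ of odd order dividing $N-1$, whose eigensystem is Eisenstein-like and is incompatible with the $\Gamma_0(N)$ weight $1$ parity condition (classically $S_1(\Gamma_0(N),\C)=0$ for exactly this reason). The exceptional image cases (subgroups projecting to $A_4$, $S_4$, or $A_5$ inside $\SL_2(\overline{\F}_2)$) require extensions of $\Q$ unramified outside the single prime $N$ with those Galois groups, which I expect to exclude by a conductor--discriminant estimate. Combining, nonzero weight $1$ mod $2$ forms of level $\Gamma_0(N)$ exist iff $\Cl(\Q(\sqrt{-N}))$ admits a nontrivial character, i.e.\ iff the class group is nontrivial. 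The main technical obstacle is this clean exclusion of the non-dihedral cases; a natural route is to reuse, and possibly strengthen, the dihedral classification of \cite{Kedlaya2019} already invoked in Theorem \ref{nokatz}.
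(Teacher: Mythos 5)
Your forward direction (nontrivial $\Cl(\Q(\sqrt{-N}))$ $\Rightarrow$ $\T^{\an}\subsetneq\T$) is essentially the paper's argument: genus theory makes $|\Cl(\Q(\sqrt{-N}))|$ odd, a nontrivial character induces to an irreducible dihedral representation of conductor $N$, and this yields a nonzero element of $S_1(\Gamma_0(N),\F_2)^{\Katz}$ (the paper cites Wiese \cite{MR2054983} for exactly this step; your ``theta series'' remark is the same mechanism and would need that citation or a proof). The problem is the converse: when the class group is trivial you must show $S_1(\Gamma_0(N),\F_2)^{\Katz}=0$, i.e.\ rule out \emph{all} mod $2$ weight $1$ eigensystems, not just the dihedral ones. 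Your plan is to classify the image of the attached representation and to exclude the large-image cases ``by a conductor--discriminant estimate,'' which you yourself flag as the main obstacle --- and this is a genuine gap, not a technicality. First, in characteristic $2$ the relevant large images are $\SL_2(\F_{2^k})$ (Dickson; $A_4$ sits inside a Borel and $S_4$ does not occur), and such extensions of $\Q$ ramified at a single prime genuinely exist: the paper's own $N=653$ example exhibits an $\SL_2(\F_4)\cong A_5$ field ramified only at $653$, producing weight $1$ Katz forms even though $\Cl(\Q(\sqrt{653}))$ is trivial in the real-quadratic sense. Second, discriminant bounds cannot do the exclusion for the levels you actually need: a representation of conductor $N$ is at worst tamely ramified at $N$, so the fixed field has root discriminant below $N$, and for $N=163$ (or even $43$, $67$) this is far above any unconditional or GRH Odlyzko-type lower bound, so no conductor--discriminant estimate will rule these fields out. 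The paper explicitly acknowledges this: it remarks that the non-existence of larger-image representations for the class-number-one levels is a \emph{consequence} of its computation, not something known a priori.

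What the paper does instead for the backward direction is finite and explicit: for $N=3,7$ there are no weight $2$ forms at all, and for $N=11,19,43,67,163$ a modular-symbols computation (up to the Sturm bound) exhibits $T_2$ as an explicit $\Z$-combination of anemic Hecke operators, so $\T=\T^{\an}$ directly --- no classification of Galois images is needed. If you want a purely theoretical argument you would have to supply, for each of these seven levels, a proof that no $\SL_2(\F_{2^k})$-extension of $\Q$ unramified outside $N$ with the right local behaviour exists (and also a real argument in the reducible case: your appeal to $S_1(\Gamma_0(N),\C)=0$ does not control mod $2$ Katz Eisenstein phenomena, whereas the paper disposes of Eisenstein maximal ideals via Section \ref{reducible}). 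As it stands, your proposal proves only one implication.
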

\begin{proof}If $K=\Q(\sqrt{-N})$ has class number greater than $1$, by genus theory, since the discriminant of $K$ is $-N$ which is divisible by only a single prime, the $2$-part of the class group of $K$ is trivial, so $\text{Cl}(K)$ has a nontrivial mod $2$ multiplicative character which translates to an unramified mod 2 character $\chi$ of $\Gal(\overline{\Q}/K)$.  Inducing this to $\Gal(\overline{\Q}/\Q)$, we get a dihedral representation with Artin conductor equal to $N$.  Wiese proves in \cite{MR2054983} that all dihedral representations give rise to Katz modular forms, and so the space $S_1(\Gamma_0(N), \F_2)^{\Katz}$ is nontrivial, and hence $\T^{\an}\subsetneq\T$.

This shows that if $N$ is not $3, 7, 11, 19, 43, 67$ or $163$ (and is still a $3\bmod 4$ prime), $\T^{\an}(N)\subsetneq\T(N)$.  On the other hand, for $N=3$ and $N=7$ there are no modular forms of weight $2$, and for the other $N$, computer verification using the techniques of modular symbols, such as described in \cite{MR2289048}, provides the following table:\newline$\begin{array}{c|c}N& T_2\\ \hline11&-2T_1 \\ \hline19&0 \\ \hline43&-2T_1-2T_3+T_5 \\ \hline67&T_3-T_{11} \\ \hline163&\begin{array}{c}30T_1-16T_3-23T_5-9T_7+18T_9+3T_{11}-24T_{13}\\ +12T_{15}+40T_{17}-16T_{19}-14T_{21}-9T_{23}+2T_{25}+32T_{27} \end{array}\\ \hline\end{array}$

These each prove that there are no Katz eigenforms of weight $1$ and level $N$ for any of these $N$, and in turn that there are no Galois representations that could provide such forms.  Of course, we knew \textit{a priori} there were no dihedral representations, as they would need to arise from the class group, but we now know that there are no larger-image representations.\end{proof}
\subsection{$N\equiv1\bmod4$}
\begin{ques}\label{lenstra}Is it true that for a positive proportion of prime $N\equiv1\bmod 4$, the anemic Hecke algebra $\T^{\an}$ is not equal to the full algebra $\T$, and for a positive proportion of $N$, $\T^{\an}$ is equal to $\T$?\end{ques}
We cannot immediately claim anything about the class group, because the Cohen-Lenstra heuristics \cite[C11]{MR756082} claim that approximately $75.446\%$ of positive prime-discriminant quadratic extensions have trivial class group, so that there can be no dihedral modular forms.

The strong form of Serre's conjecture due to Edixhoven \cite[Conjecture 1.8]{MR1638480} is not known, where the strong form differs from the form proven by Khare and Wintenberger in \cite{MR2551763} in this weight $1$ case.  A result of Wiese for dihedral representations \cite{MR2054983} is known, and a converse (that the corresponding representation $\overline{\rho}$ is unramified at $2$) has been proven \cite[Corollary 1.3]{MR3247800}.  We may also use Theorem \ref{mainthm} to construct weight $1$ forms in the case that the eigenvalues of $\Frob_2$ in the characteristic $2$ representation are distinct, because there are two possible values for $a_2$, implying that $\T_{\mathfrak{m}}\neq\T^{\an}_{\mathfrak{m}}$.

We also know the subgroups of $\SL_2(\overline{\F}_2)$, by Dickson, of four types: cyclic, upper-triangular, dihedral, and full-image (see \cite[Chapter 3, Theorem 6.17]{MR514842}).  We know a modular representation must be absolutely irreducible: if not, say $f$ is a weight $1$ form for which $\overline{\rho}_f$ is reducible.  Then $Af$ is a weight $2$ form with the same representation, along with $Vf$ in the same generalized eigenspace.  But in Section \ref{reducible} we proved that $T_2$ is already contained in the Hecke algebra corresponding to any eigenform with reducible representation, meaning that the dimension of $S_2(\Gamma_0(N),\F_2)_{\mathfrak{m}}$ is dimension $1$, not $2$.  Therefore only absolutely irreducible representations can be modular, so only dihedral and full-image representations can exist.  So assuming the strong version of Serre's conjecture, we know that for any weight $1$ forms to exist at level $N$, we need either a dihedral extension of $\Q$, which must arise from inducing from the class group of $\Q(\sqrt{N})$, or we need an extension of $\Q$ unramified outside $N$ with Galois group isomorphic to $\SL_2(\F_{2^k})$ for some $k$.

Work has been done by Lipnowski \cite{lipnowski2016bhargava} to interpret Bhargava's heuristics for the Galois group $\GL_2(\F_p)$ for $p$ a prime, in order to count elliptic curves by their conductors through their $p$-adic representations.  Although not done in this current note, it appears tractable to similarly analyze the groups $\SL_2(\F_{2^k})$ and obtain a heuristic, explicit or not, on how many primes $p$ have an elsewhere-unramified extension with each of these as their Galois groups.  Because of the Cohen-Lenstra heuristics, it appears likely that infinitely many, even a positive proportion, of primes $1\bmod4$ have no weight $1$ forms, so $\T=\T^{\an}$, and a positive proportion of primes have some weight $1$ form so $\T^{\an}\subsetneq\T$.

\subsubsection{Explicit example: $N=653$}
An instructive example is that of $N=653$.  Of course this is $1\bmod 4$, and so any dihedral representation that would give a weight $1$ form would have to come from an induction of the class group of $\Q(\sqrt{653})$, but the Minkowski bound is $\frac{1}{2}\sqrt{653}\approx12.77$, and $2,3,5$ are inert and $7=230^2-653\cdot9^2$ and $-11=51^2-653\cdot2^2$ are norms of principal ideals.  So $\Q(\sqrt{653})$ has class number $1$.  But the Galois closure $L$ of the field $\Q[x]/(x^5+3x^3-6x^2+2x-1)$ has Galois group $A_5=\SL_2(\F_4)$, and is ramified only at $653$ with ramification degree $2$ and inertial degree $2$.  Therefore, Edixhoven predicts that the tautological Galois representation gives rise to a weight $1$ level $\Gamma_0(653)$ modular form.  This is not a classical form, as $\SL_2(\F_4)$ does not embed into $\GL_2(\C)$, where all weight $1$ characteristic $0$ eigenforms must arise from.

On the other hand, $\SL_2(\F_4)$ does embed into $\PGL_2(\C)$, and by a theorem of Tate, all projective Galois representations lift.  We can follow the proof given by Serre in \cite{MR0450201} to obtain a lift, unramified away from $653$, and with Artin conductor $653^2$.  The fixed field of the kernel of this representation is a quadratic extension of $L[x]/(x^4-x^3+82x^2-1102x+13537)$, which is itself the compositum of $L$ and the quartic subfield of the $653$rd roots of unity.  Locally at $653$ it is a faithful representation of $\Gal(\Q_{653}(\sqrt[8]{653},\sqrt{2})/\Q_{653})$, a Galois group isomorphic to $\langle x,y|x^8=y^2=e, yx=x^5y\rangle$.

We therefore find that, as the Artin conjecture for odd representations has been proven in \cite{MR2551763}, an eigenform of weight $1$ and level $653^2$ that reduces to the characteristic $2$ form of level $653$ we found above.  We can additionally twist by the nontrivial character of $\Q(\sqrt{653})/\Q$, not changing the determinant or level, to get a second Artin representation, and hence a second modular form of the same weight and nebentypus.  These two eigenforms are congruent mod $2$, so their average is also an integral form, and there is therefore a nilpotent element of the weight $1$ mod $2$ Hecke algebra, in a similar sense to \cite[Lemma 3.8]{MR2460912}.  And conjugating the $\F_4$-forms, we obtain $2$ more weight $1$ forms of level $653$.  So the index of $\T^{\an}$ in $\T$ must be at least $16$.

Indeed, we can find the following four (non-eigen)forms of weight $2$ and level $653$:\begin{align*}f_1=0 &q^{1} &+ 1 &q^{2} &+ 2 &q^{3} &- 4 &q^{4} &+ 0 &q^{5} &+ 2 &q^{6} &+ 0 &q^{7} &+ 4 &q^{8} &+ 0 &q^{9} &+ 4 &q^{10} &+ 0 &q^{11} &+ 1 &q^{12} &- 6 &q^{13} &+\ldots\\
f_2=0 &q^{1} &+ 0 &q^{2} &+ 2 &q^{3} &- 3 &q^{4} &+ 0 &q^{5} &+ 2 &q^{6} &+ 2 &q^{7} &+ 2 &q^{8} &+ 4 &q^{9} &- 3 &q^{10} &+ 4 &q^{11} &- 6 &q^{12} &+ 0 &q^{13} &+\ldots\\
f_3=0 &q^{1} &+ 0 &q^{2} &+ 0 &q^{3} &+ 4 &q^{4} &+ 0 &q^{5} &+ 1 &q^{6} &+ 2 &q^{7} &+ 2 &q^{8} &+ 4 &q^{9} &+ 5 &q^{10} &+ 2 &q^{11} &+ 0 &q^{12} &+ 4 &q^{13} &+\ldots\\
f_4=0 &q^{1} &- 2 &q^{2} &- 6 &q^{3} &+ 2 &q^{4} &+ 0 &q^{5} &+ 2 &q^{6} &+ 2 &q^{7} &- 5 &q^{8} &+ 0 &q^{9} &+ 0 &q^{10} &- 2 &q^{11} &- 6 &q^{12} &- 2 &q^{13} &+\ldots\end{align*}each of whose odd-power coefficients are all even, proving that none of $T_2, T_4, T_6$ or $T_8$ are in $\T^{\an}$ plus the other $3$.  But a calculation up to the Sturm bound of $109$ proves that there are no other modular forms with all odd-power coefficients and coefficients of $q^2, q^4, q^6, q^8$ all even but some other coefficient is odd.  Therefore $\T=2\T+\langle T_2, T_4, T_6, T_8\rangle$, so $\T/2\T$ is generated as an $\F_2$-vector space by $T_2, T_4, T_6, T_8$.  By Lemma \ref{doubleT}, $\T/\T^{\an}$ is a quotient of $\T/2\T$, but from the above forms $T_2, T_4, T_6, T_8$ are independent in $\T/\T^{\an}$ so the index of $\T^{\an}$ in $\T$ must be exactly $2^4=16$.
\bibliographystyle{alpha}
\bibliography{indexofTbib}
\end{document}